\documentclass[
11pt,                          
english                        
]{article}


%
%
\usepackage{appendix}
\usepackage{bbm}
\usepackage[english]{babel}    
\usepackage{authblk}
\usepackage{amsmath}           
\usepackage[utf8]{inputenc}    
\usepackage[T1]{fontenc}       
\usepackage{longtable}         
\usepackage{exscale}           
\usepackage[final]{graphicx}   
\usepackage[sort]{cite}        
\usepackage{array}             
\usepackage{eucal} 
\usepackage{hyperref}
\usepackage{wasysym}           
\usepackage[a4paper]{geometry} 
\usepackage[multiuser]{fixme}  
\usepackage{xspace}            
\usepackage{tikz}              
\usepackage{ifdraft}           
\usepackage{chairx}            
\usepackage[expansion=false    
           ]{microtype}        
\usepackage[nottoc]{tocbibind} 
\usepackage[all]{xy}
\usepackage{slashed}
\usepackage{cleveref}
\usetikzlibrary{matrix}
\usepackage{tikz-cd}
%
%

\geometry{bindingoffset=0cm}
\geometry{hcentering=true}
\geometry{hscale=0.8}
\geometry{vscale=0.8}

\renewcommand{\P}{\mathcal{P}}
\renewcommand{\C}{\mathcal{C}}
%
%

\ifdraft{\synctex=1}{}

%
%

\fxusetheme{color}

%
%

%
%

\title{Lie Theory for Complete Curved $A_\infty$-algebras}
\date{}
\author[1]{Niek de Kleijn}
\author[2]{Felix Wierstra\thanks{The second author acknowledges the financial support from Grant  GA CR  No. P201/12/G028.}}
\affil[1]{Korteweg-de Vries Instituut, Univesity of Amsterdam, Science Park 107, Amsterdam, n.dekleijn@uva.nl}
\affil[2]{Faculty of Mathematics and Physics, Charles University, Sokolovsk\'a 49/83, 186 75 Praha 8, Czech Republic, felix.wierstra@gmail.com}

\begin{document}

\maketitle

\begin{abstract}
In this paper we develop the $A_\infty$-analog of the Maurer-Cartan simplicial set associated to an $L_\infty$-algebra and show how we can use this to study the deformation theory of $\infty$-morphisms of algebras over non-symmetric operads. More precisely, we define a functor from the category of (curved) $A_\infty$-algebras to simplicial sets which sends an $A_\infty$-algebra to the associated simplicial set of Maurer-Cartan elements. This functor has the property that it gives a Kan complex. We also show that this functor can be used to study deformation problems over a field of characteristic greater or equal than $0$. As a specific example of such a deformation problem we study the deformation theory of $\infty$-morphisms over non-symmetric operads.
\end{abstract}

\section{Introduction}

Associative algebras up to homotopy, also known as $A_\infty$-algebras, play an important role in many areas of mathematics and mathematical physics. They were originally defined in topology to study loop spaces, but found later applications in representation theory, algebraic geometry, string field theory, mathematical physics, etc. \cite{Stasheff1963,Erler2015, Stasheff1992,Fukaya2003}

The goal of this paper is to define and study the $A_\infty$-analog of the Deligne-Getzler-Hinich groupoid associated to a homotopy Lie algebra, also known as $L_\infty$-algebra. In \cite{Getzler2009}, Getzler associates to every nilpotent $L_\infty$-algebra $L$ a simplicial set $MC_\bullet(L)$. This simplicial set has many good properties and important applications. In \cite{Getzler2009}, it is for example shown that $MC_\bullet(L)$ is a Kan complex.

One important example of an application of the Maurer-Cartan simplicial set of an $L_\infty$-algebra is that the Maurer-Cartan simplicial set can be used in deformation theory to encode the set of deformations  of an object. In this case the zero simplices of $MC_\bullet(L)$, which are the Maurer-Cartan elements of $L$, correspond to the  deformations. Two deformations are equivalent if and only if the corresponding Maurer-Cartan elements are in the same path component of $MC_\bullet (L)$.

It is a well known philosophy that over a field of characteristic $0$, all deformation problems are controlled by the Maurer-Cartan elements in an $L_\infty$-algebra. This philosophy goes back to Deligne, Drinfeld, Feigin, Hinich, Kontsevich-Soibelman, Manetti, and many others, and was made precise by Lurie in \cite{Lurie2010}. Over a field of characteristic $p>0$ this is no longer true, especially since the theory of $L_\infty$-algebras has significant problems over a field of characteristic $p \neq 0$. The theory developed in this paper tries to solve this gap in the case that the deformation problem is controlled by an $A_\infty$-algebra.

The goal of this paper is to define an analog of the Maurer-Cartan simplicial set for $A_\infty$-algebras. Similarly to \cite{Getzler2009}, we define a functor 
\[
MC_\bullet:A_\infty\mbox{-algebras} \longrightarrow  \mbox{Simplicial Sets},
\]
from the category of complete $A_\infty$-algebras to the category of simplicial sets.  We show that for an $A_\infty$-algebra $A$, the corresponding simplicial set $MC_\bullet(A)$ is a Kan complex.

Because the simplicial set $MC_\bullet(A)$ is a Kan complex, we can define an equivalence relation on the set of Maurer-Cartan elements of $A$, where we define two Maurer-Cartan elements to be homotopy equivalent if they are in the same path component of $MC_\bullet(A)$. This allows us to use the Maurer-Cartan simplicial set $MC_\bullet(A)$ to study a deformation theory controlled by $A_\infty$-algebras.

In Section \ref{sec:applications} we give an example of a deformation problem controlled by $A_\infty$-algebras. We explain in this section how the deformation theory of $\infty$-morphisms over non-symmetric operads is controlled by $A_\infty$-algebras. Again the main advantage here is that we no longer have any restrictions on the field we are working over.

\subsection{Comparison with other approaches}

Most of the constructions in this paper are analogous to the $L_\infty$-case. For example our twisting procedure is based on Dolgushev's thesis (see \cite{Dolgushev2005}) and the construction of the Maurer-Cartan simplicial set is based on \cite{Getzler2009}. It seems plausible that many other results from the theory of $L_\infty$-algebras can also be generalized to the $A_\infty$-case (for a few possible generalizations see also the next section about future work).

There are however also a few papers which explicitly deal with versions of some of the constructions we use for $A_\infty$-algebras. The Maurer-Cartan equation for $A_\infty$-algebras has appeared before in for example \cite{Fukaya2003}. There are several papers that explicitly deal with curved $A_\infty$-algebras, see for example the paper by Hamilton and Lazarev \cite{HamiltonLazarev2008} and  the paper by Nicol\'as \cite{Nicolas2008}. Another area in which twisting of $A_\infty$-algebras is defined is in  string field theory (see for example \cite{Erler2015}). The advantage of the work in this paper is that it gives a new point of view to this twisting procedure and that it allows us to work with curved $A_\infty$-algebras. 

In \cite{CHL}, Chuang, Holstein and Lazarev define a notion of a Maurer-Cartan element in an associative algebra and the relation of strong homotopy between these Maurer-Cartan elements. It seems that for associative algebras our definitions coincide.

\subsection{Possible applications, future work and open questions}
 
In this section we sketch some further applications and open questions associated to this paper. 

Given an $A_\infty$-algebra $A$, we can form two different simplicial sets. One by the construction described in this paper and one by taking the Maurer-Cartan simplicial set of the $L_\infty$-algebra corresponding to $A$ with all the higher commutators as $L_\infty$-operations. Although these simplicial sets are clearly not isomorphic, we do expect them to be homotopy equivalent. We have not been able to find a satisfying proof yet and this will be the topic of future work.

A possible further application of the theory developed in this paper is the non-symmetric version of the paper \cite{DHR2015}.  In this paper Dolgushev, Hoffnung and Rogers show that, over a field of characteristic $0$, the category of homotopy algebras over a symmetric operad with $\infty$-morphisms, is enriched over (filtered) $L_\infty$-algebras. By using the Maurer-Cartan simplicial set of an $L_\infty$-algebra this implies that the category of homotopy algebras is also enriched over simplicial sets. It seems highly likely that the theory developed in this paper can be used to answer the question: What do homotopy algebras over a non-symmetric operad form? The main advantage of the theory developed in this paper is that it does not require us to work over a field of characteristic $0$ and would answer this question in a much larger generality than in \cite{DHR2015}. For the sake of briefness we do not work out the details of this construction and leave it as a topic for future work.

The methods developed in this paper do unfortunately not allow us to answer the question: What do homotopy algebras over a symmetric operad over a field of characteristic $p>0$ form? This question still remains open and will be a topic of future research.

Another question that is not treated in full detail in this paper, is the behavior of the Maurer-Cartan simplicial set with respect to quasi-isomorphisms. It seems reasonable to expect that an analog of Theorem 1.1 of \cite{DR2015} also holds for the Maurer-Cartan simplicial set of an $A_\infty$-algebra. This theorem would state that filtered $\infty$-morphisms between filtered $A_\infty$-algebras would induce homotopy equivalences between the corresponding Maurer-Cartan simplicial sets. Since there are several technical differences between our setting and their setting, this generalization is not completely straightforward and we will make this a topic of future work.

\subsection{Conventions}

In this paper we will use the following conventions and notations. We will always work over a field $\K$ of characteristic $p \geq 0$ and always in the category of cochain complexes. We will use a cohomological grading on our cochain complexes, i.e. we use superscripts to indicate the degrees and the differential $d$ will have degree $+1$. We will further assume that all our $A_\infty$-algebras are shifted $A_\infty$-algebras unless stated otherwise. This means that all the products $Q_n$ will have degree $+1$, more details about this are given in Section \ref{sec3}. 

The suspension of a cochain complex $V$ is denoted by $sV$ and is defined by $sV^n=V^{n-1}$. The linear dual of a cochain complex $V$  is denoted by $V^\vee$ and is defined as $\mathrm{Hom}_{\K}(V,\K)$.

All tensor products will be taken over the ground field $\K$, unless stated otherwise. With the notation $V^{\otimes n}$ we will denote $V \otimes ... \otimes V$, where $V$ appears $n$ times and by convention we set $V^{\otimes 0}$ equal to $\K$. We will also implicitly assume that we are using the Koszul sign rule, eg. we assume that the isomorphism $\tau:V\otimes W \rightarrow W \otimes V$ is given by $\tau(v\otimes w)=(-1)^{\vert v \vert \vert w\vert}w \otimes v$, for $v\in V$ and $w \in W$.


\section{Coassociative coalgebras}

In this section we recall the basic definitions for coassociative coalgebras, these coalgebras will play an important role in the definition of $A_\infty$-algebras. This section is mainly meant to fix notation and conventions, we will therefore assume that the reader is familiar with the basic notions of coassociative coalgebras. For more details we refer the reader to Section 1.2 of \cite{LodayVallette}.

Recall that a coassociative coalgebra $C$ is a cochain complex $C$ together with a map $\Delta : C \rightarrow C \otimes C$, which is coassociative and compatible with the differential. We say that a coassociative coalgebra $C$ is counital if there is a map $\epsilon:C \rightarrow \K$ such that this is a counit for the coproduct. A coassociative coalgebra $C$ is called coaugmented if there is an additional map $\eta:\K \rightarrow C$ given, such that $\eta$ is a morphism of coassociative coalgebras. Note that because $\eta$ is a morphism of coassociative coalgebras we get a canonical splitting of $C$ as $C=\K \oplus \mathrm{ker}(\epsilon)$, the ideal $\mathrm{ker}( \epsilon)$ is often denoted by $\bar{C}$ and is called the coaugmentation ideal. The coproduct on $\bar{C}$ will be denoted by $\bar{\Delta}:\bar{C}\rightarrow \bar{C} \otimes \bar{C}$. 

The splitting induced by the coaugmentation defines a pair of adjoint functors between the category of coaugmented coassociative coalgebras and non-counital coassociative coalgebras. The functor from coaugmented coassociative coalgebras is defined by sending  $C$ to its coaugmentation ideal $ \bar{C}$. The adjoint of this functor is defined by sending a non-counital coassociative coalgebra $\bar{C}$ to  the coaugmented coassociative coalgebra $C:=\bar{C} \oplus \K$, where the coaugmentation is defined as the   inclusion of $\K$ into $C$  and the counit as the projection onto $\K$. It is a well known result that these functors define an equivalence of categories.

A coaugmented coassociative coalgebra is called conilpotent if the coradical filtration is exhaustive. The cofree coaugmented conilpotent coassociative coalgebra cogenerated by a cochain complex $V$ is denoted by $T^c(V)$ and is defined as follows. As a cochain complex it is given by $T^c(V):=\bigoplus_{n \geq 0} V^{\otimes n}$, the coproduct is defined by deconcatenation, more explicitly $\Delta$ is given by 
\[\Delta(a_1\cdots a_n)=1\otimes a_1\cdots a_n+a_1\cdots a_n\otimes 1 + \sum_{i=1}^{n-1} a_1\cdots a_i\otimes a_{i+1}\cdots a_n.\]
The coaugmentation is given by the inclusion of $\K$ as $V^{\otimes 0}$ and the  counit is defined as the projection onto $V^{\otimes 0}$. Since the element corresponding to $V^{\otimes 0}$ plays a special role we will denote it by $1$. As is explained in \cite{LodayVallette}, $T^c(V)$ is the cofree coaugmented conilpotent coassociative coalgebra cogenerated by $V$,  thus any morphism $C\rightarrow T^c(V)$ is determined by its image on the cogenerators, i.e. there is a bijection $\mathrm{Hom}_{coalg}(C,T^c(V)) \cong \mathrm{Hom}_{\K}(C,V)$. The cofree conilpotent coaugmented coassociative coalgebra is sometimes also called the tensor coalgebra.

Let $(C,\Delta_C)$ and $(D,\Delta_D)$ be two coassociative coalgebras, then we can equip the tensor product $C \otimes D$ with the structure of a coassociative coalgebra. The coproduct 
\[\Delta_{C \otimes D}:C \otimes D \rightarrow C \otimes D \otimes C \otimes D\]
is given by
\[
C\otimes D \xrightarrow{\Delta_C \otimes \Delta_D} C \otimes C \otimes D \otimes D \xrightarrow{\id \otimes \tau \otimes \id} C \otimes D \otimes C \otimes D,
\]
where $\tau:C \otimes D \rightarrow D \otimes C$ is the flip map. 

As is described in Section 1.3 of \cite{LodayVallette}, the cofree conilpotent coaugmented coassociative coalgebra can be equipped with a natural product called the shuffle product. This product plays an important role in the definition of the twist in Section \ref{sec4} of this paper. The shuffle product is characterized by the following properties, it is a morphism of coassociative coalgebras
\[
\mu_{sh}:T^c(V) \otimes T^c(V) \rightarrow T^c(V),
\]
and on cogenerators it is given by 
\[
\mu_{sh}:T^c(V)\otimes T^c(V) \rightarrow V,
\]
which is defined on $V\otimes \K \oplus \K \otimes V \subset T^c(V) \otimes T^c(V)$ by
\[
\mu_{sh}(1 \otimes v)=\mu_{sh}(v\otimes 1)=v 
\]
and is zero otherwise. Explicitly we have 
\[\mu_{sh}(v_1\cdots v_p\otimes v_{p+1}\cdots v_{p+q})=\sum_{\sigma\in \mbox{\tiny Sh}(p,q)}\epsilon(\sigma)v_{\sigma(1)}\cdots v_{\sigma(p+q)}\] where we denote by 
$\epsilon(\sigma)=\epsilon(\sigma, v_1,\ldots, v_{p+q})$ the Koszul sign and by $\mbox{Sh}(p,q)$ the set of $(p,q)$-shuffles in the symmetric group on $p+q$ letters. With the shuffle product the tensor coalgebra becomes a unital associative algebra, where the unit for the shuffle product is given by the element $1$. It turns out that the shuffle product and the coproduct satisfy the Hopf compatibility relation which is given by 
\[
\Delta \circ \mu_{sh}=(\mu_{sh}\otimes \mu_{sh})\circ (\id \otimes \tau \otimes \id) \circ (\Delta \otimes \Delta ),
\]
where $\tau$ is the flip map. So with the shuffle product and deconcatenation coproduct, the tensor coalgebra  becomes a bialgebra.


A coderivation on a coalgebra $(C,\Delta)$ is a linear map $D\colon C\rightarrow C$ such that 
\[(D\otimes \id +\id\otimes D)\Delta=\Delta D,\] where we remind the reader that we always use the Koszul sign convention. A codifferential on the graded coalgebra $C$ is a degree $+1$ coderivation $Q$ such that $Q^2=0$. Note that we do {\it not} assume that $Q(1)=0$. 
Since $T^c(V)$ is cofree it turns out that any coderivation $D$ on it is uniquely determined by the composite 
\[pr_V\circ D\colon T^c(V)\rightarrow T^c(V)\rightarrow V\] 
where $pr_V$ simply denotes the projection onto the cogenerators.

Denote by \[D_n\colon  V^{\otimes n}\hookrightarrow T^c(V)\stackrel{D}{\longrightarrow}T^c(V)\stackrel{pr_V}\longrightarrow V\] the weight $n$ component of $D$. The coderivation $D$ is determined by the $D_n$  by extending the formula  
\[D(v_1\cdots v_n):=\mu_{sh}(D_0(1), v_1\cdots v_n)+\sum_{p=1}^n\sum_{i=0}^{n-p} v_1\cdots v_iD_p(v_{i+1}\cdots v_{i+p})v_{i+p+1}\cdots v_n\]
to a linear map. This gives a bijection between $\mathrm{Coder}(T^c(V))$ and $\mathrm{Hom}_{\mathbb{K}}(T^c(V),V)$. 






\section{$A_{\infty}$-Algebras}\label{sec3}

In this section we introduce shifted $A_\infty$-algebras, which will be the main objects of study in this paper. 

\begin{definition}
Let $A$ be a graded vector space, a (curved) shifted $A_{\infty}$-algebra structure on $A$ is defined as a codifferential $Q:T^c(A)\rightarrow T^c(A)$ on the cofree coaugmented conilpotent coassociative coalgebra cogenerated by $A$.
\end{definition}

Since $T^c(A)$ is cofree, every derivation is determined by its image on the cogenerators. A shifted $A_\infty$-algebra $A$ is therefore equivalent to a sequence $\{Q_n\}_{n\geq 0}$ of degree $+1$ maps $Q_n\colon A^{\otimes n}\rightarrow A$ satisfying a quadratic condition coming from $Q^2=0$. This gives rise to the ``associativity" conditions
\[\sum_{\substack{a,b,c\geq 0\\ a+b+c=n}}Q_{a+c+1}\circ (\id^{\otimes a}\otimes Q_b\otimes \id^{\otimes c})=0\]
for all $n\geq 0$. 
A flat shifted $A_\infty$-algebra is one for which $Q_0=0$, i.e. $Q(1)=0$. Since in this paper we will be working with curved algebras more often than with flat algebras, we will from now on assume that all $A_\infty$-algebras are curved unless specifically stated to be flat, this is contrary to the usual convention. The associativity conditions above show clearly that a flat shifted $A_\infty$-algebra such that $Q_k=0$ for $k\geq 3$ is simply a shifted dg associative algebra.
Note also that a shifted $A_\infty$-algebra structure on $A$ is equivalent to the usual notion of unshifted $A_\infty$-algebra structure on $sA$, where the multi-products in that setting are given by  $m_n=s\circ Q_n\circ (s^{-1})^{\otimes n}$.

\begin{remark}
An alternative description of flat shifted $A_\infty$-algebras is as algebras over the shifted $A_\infty$-operad. The shifted $A_\infty$-operad is defined as the operadic cobar construction on the coassociative cooperad $As^c$ (concentrated in degree $0$). This equivalence is a straightforward consequence of Theorem 10.1.13 of \cite{LodayVallette}, since they assume that a coderivation maps $1$ to $0$ this is the same as requiring that the shifted $A_\infty$-algebra is flat.
\end{remark}

\begin{definition}[Curvature of an $A_\infty$-Algebra]
The curvature of an $A_\infty$-algebra $(A,Q)$ is the element $Q(1)=Q_0(1)\in A^1$. 
\end{definition}

To facilitate certain infinite sums we will need a topology. To obtain this we consider a decreasing filtration of subspaces
\[A = F^1A\supset F^2A\supset \ldots \] which satisfies $\bigcap_{k}F^kA=\{0\}$. This yields the metric topology given by the metric $d(v,v)=0$ and  $d(v,w)=2^{-|v-w|}$ if $v\neq w$, where $|x|=\max\{ k\mid x\in F^kA\}$. We also assume that the maps $Q_n$ preserve this filtration in the sense that 
\[Q_n(F^{i_1}A\otimes \ldots\otimes F^{i_n}A)\subset F^{i_1+\ldots +i_n}A.\] We will call a shifted $A_\infty$-algebra {\it complete} if this metric is complete. Note that a filtration $F^iA$ on the graded vector space $A$ induces a filtration  $F^iT^c(A)$ in the usual way
\[F^iT^c(A)=\bigoplus_{n\geq 0}\bigoplus_{\substack{i_1,\ldots, i_n\in \N\\ i_1+\ldots+i_n=i}}F^{i_1}A\otimes\ldots\otimes F^{i_n}A.\]  Note also that if $v\in T^c(A)$ then 
\[v=\sum_{n=0}^N\sum_{i=1}^{k_n} \lambda_{i,n}a_{i, 1}\otimes\ldots\otimes a_{i, n}\] 
for some $N, k_n\in \N$, $\lambda_{i,n}\in \K$ and 
$a_{i,j}\in A$. Thus there are numbers $m_{i,j}\in \Z$ such that $a_{i,j}\notin F^{m_{i,j}}A$ and thus, setting $M=N\max\{m_{i,j}\}$, $v\notin F^MT^c(A)$. This implies that 
$\bigcap_{k}F^kT^c(A)=\{0\}$. Again we consider $T^c(A)$ as a metric space for the induced metric as above. This space is not in general complete again and so we denote the completion of $T^c(A)$ by $\widehat{T^c}(A)$. Note that, since the structure maps all respect the filtration, the coalgebra structure and the codifferential extend uniquely to $\widehat{T^c}(A)$ in the appropriate completed sense, eg. the coproduct maps into the completed tensor product see appendix \ref{A}. 

\begin{remark} Note that the ``lower central series" filtration defined by 
\[\mathcal{F}^iA=\sum_{n\geq 1 }\sum_{\substack{i_1, \ldots, i_n\\i_1+\ldots +i_n=i}}Q_n(\mathcal{F}^{i_1}A\otimes \ldots\otimes \mathcal{F}^{i_n}A)\]
for $i>1$ is automatically preserved by the $Q_n$. If $(A, Q)$ is moreover nilpotent, meaning that $\mathcal{F}^iA=0$ for sufficiently large $i$, then the filtration is also complete. In general we may call a shifted $A_\infty$-algebra $(A,Q)$ ``pro-nilpotent" if the lower central series filtration is complete. Note that for a nilpotent shifted $A_\infty$-algebra we find that $\widehat{T^c}(A)= \prod_{n\geq 0}(A)^{\otimes n}$; of course this is true for any filtration that terminates.
\end{remark}

A morphism of $A_\infty$-algebras $F\colon (A,Q_A)\rightarrow (B,Q_B)$, called an $\infty$-morphism, is a degree $0$ counital coalgebra morphism $F\colon T^c(A)\rightarrow T^c(B)$ such that $FQ_A=Q_BF$.
As mentioned above, coalgebra morphisms $C\rightarrow T^c(B)$ are determined by their projection to cogenerators and thus $\infty$-morphisms are completely determined by the linear maps 
\[F_n\colon A^{\otimes n}\rightarrow B\]
for $n\geq 1$ through the formula 
\begin{equation}
    \label{eq:coalgebramorphism}
    \begin{gathered}
      F(a_1\cdots a_n)= 
      \\     
      \sum_{p\geq1}\sum_{\substack{k_1,\ldots, k_p\geq1\\k_1+\ldots+k_p=n}} F_{k_1}(a_1\cdots a_{k_1})\cdots
      F_{k_p}(a_{n-k_p+1}\cdots a_n),
    \end{gathered}
  \end{equation}
  and the assertion that $F(1)=1$. 
  We call a morphism $F\colon (A,Q_A)\rightarrow (B,Q_B)$ a strict morphism if it satisfies $F_n=0$ for all $n\geq 2$. In this case we have $(Q_B)_n\circ (F_1)^{\otimes n}=F_1\circ(Q_A)_n$ for all $n\geq 0$. Note that the set of strict morphisms is in general strictly contained in the set of $\infty$-morphisms.  
 \vspace{0.3cm}
 
 Finally if $(A,Q^A)$ is a flat shifted $A_\infty$-algebra then it yields an underlying cochain complex $(A,Q^A_1)$. Any $\infty$-morphism $F\colon (A,Q^A)\rightarrow (B,Q^B)$ induces a map $ F_1\colon (A,Q^A_1)\rightarrow (B,Q^B_1)$ of the underlying cochain complexes. We will call an $\infty$-morphism of flat shifted $A_\infty$-algebras an $\infty$-quasi-isomorphism if the induced map of cochain complexes is a quasi-isomorphism. 
 
 \begin{remark}
Whenever $\infty$-morphisms occur in this article they will be assumed also to respect the filtrations that are under consideration. Note that any $\infty$-morphism respects the lower central series filtrations. We also impose the stronger condition on $\infty$-quasi-isomorphisms $F\colon A\rightarrow B$ that they induce quasi-isomorphisms $F_kA\rightarrow F_kB$ for all $k\geq 1$. 
 \end{remark}

\subsection{Extension of scalars \label{sec:extensionofscalars}}

Given a unital differential graded associative algebra $(C, \mathbbm{1}, d, \mu)$ and an $A_\infty$-algebra $(A,Q)$ it is possible to equip the tensor product $A \otimes C$ with a new $A_\infty$-structure, which we call the extension of scalars by $C$. Using the identifications $(A\otimes C)^{\otimes n}\cong A^{\otimes n}\otimes C^{\otimes n}$ (using the Koszul sign rule) the maps $CQ_n\colon(A\otimes C)^{\otimes n}\rightarrow A\otimes C$ are given by 
\begin{itemize}
    \item $CQ_0(1)=Q_0(1)\otimes \mathbbm{1}$;
    \item $CQ_1=Q_1\otimes \id + \id \otimes d_C$;
    \item $CQ_k=Q_k\otimes \mu^{(k)}$ for $k\geq 3$.
\end{itemize}
Here $\mu^{(k)}=\mu\circ\mu\otimes \id\circ\ldots\mu\otimes\id^{\otimes k-2}$ denotes the $(k-1)$-fold iterated product of $C$. Note that if $A$ is flat then $A\otimes C$ is automatically flat again. 

\begin{proposition}\label{completeextension}
Let $A$ be an $A_{\infty}$-algebra with filtration $F^iA$ and $C$ a finite dimensional associative algebra, then  the $A_\infty$-algebra $A\otimes C$ equipped with the filtration
\[
F^i(A\otimes C):=(F^iA)\otimes C
\]
is complete if $A$ is complete. 
\end{proposition}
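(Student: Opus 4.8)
The plan is to exploit the finite-dimensionality of $C$ to reduce the completeness of $A\otimes C$ to that of finitely many copies of $A$. First I would fix a basis $c_1,\ldots,c_m$ of $C$, so that every element of $A\otimes C$ is written uniquely as $x=\sum_{j=1}^m a_j\otimes c_j$ with $a_j\in A$. Because $\{c_j\}$ is a basis, the filtration splits as $F^i(A\otimes C)=(F^iA)\otimes C=\bigoplus_{j=1}^m (F^iA)\otimes c_j$, and hence $x\in F^i(A\otimes C)$ if and only if $a_j\in F^iA$ for every $j$. Translating this into the valuation $|x|=\max\{i\mid x\in F^i(A\otimes C)\}$ yields the clean formula $|x|=\min_{1\le j\le m}|a_j|$. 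In particular, if $x\neq 0$ then some $a_j\neq 0$, so $|a_j|<\infty$ and thus $|x|<\infty$, which re-establishes $\bigcap_k F^k(A\otimes C)=\{0\}$ and shows that the induced metric is the ``maximum'' metric $d(x,y)=\max_j d(a_j,b_j)$ associated to the product of $m$ copies of the metric space $A$.

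With this identification the statement becomes the standard fact that a finite product of complete metric spaces is complete. Concretely, I would take a Cauchy sequence $x^{(k)}=\sum_j a_j^{(k)}\otimes c_j$ in $A\otimes C$. The valuation formula gives $|x^{(k)}-x^{(\ell)}|=\min_j|a_j^{(k)}-a_j^{(\ell)}|$, so $(x^{(k)})_k$ being Cauchy forces each component sequence $(a_j^{(k)})_k$ to be Cauchy in $A$. Since $A$ is complete, each converges to some $a_j\in A$; setting $x=\sum_j a_j\otimes c_j$, one checks that $|x^{(k)}-x|=\min_j|a_j^{(k)}-a_j|\to\infty$, i.e.\ $x^{(k)}\to x$. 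This proves completeness of the metric on $A\otimes C$.

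Finally I would record that the structure maps $CQ_n$ preserve the filtration $F^i(A\otimes C)$, as required for $A\otimes C$ to be a complete $A_\infty$-algebra in the sense of the text. This is immediate from the defining formulas, in particular $CQ_k=Q_k\otimes\mu^{(k)}$, together with the filtration-preservation of the $Q_k$ on $A$: after the reshuffling isomorphism $(A\otimes C)^{\otimes k}\cong A^{\otimes k}\otimes C^{\otimes k}$, the factor $\mu^{(k)}$ acts only on the $C^{\otimes k}$-part, which carries no filtration weight in the convention $F^i(A\otimes C)=(F^iA)\otimes C$, so the filtration degree is controlled entirely by $Q_k$.

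I do not anticipate a serious obstacle; the single point where finite-dimensionality is genuinely used — and the only step that really needs care — is the splitting $F^i(A\otimes C)=\bigoplus_j (F^iA)\otimes c_j$ and the resulting valuation formula $|x|=\min_j|a_j|$. For an infinite-dimensional $C$ this decomposition would involve an infinite direct sum, the ``minimum over $j$'' would range over infinitely many indices, and a Cauchy sequence could fail to converge because its support in the $c_j$-directions need not stabilize; the hypothesis that $C$ is finite dimensional is precisely what excludes this pathology.
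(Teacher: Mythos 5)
Your proof is correct and follows essentially the same route as the paper's: decompose a Cauchy sequence in $A\otimes C$ along a basis of $C$, observe that each component sequence is Cauchy in $A$ and hence converges by completeness, and use the finiteness of the basis to conclude that the original sequence converges to the sum of the componentwise limits. Your additional remarks --- the explicit valuation formula $|x|=\min_j|a_j|$, the verification that $\bigcap_k F^k(A\otimes C)=\{0\}$, and the check that the maps $CQ_n$ preserve the filtration --- are correct refinements that the paper leaves implicit.
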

\begin{proof} 

\leavevmode 

\noindent Suppose $(x_n)_{n\in \N}$ is a Cauchy sequence in $A\otimes C$ and let $B\subset C$ be a basis. Then there is a unique decomposition $x_n=\sum_{b\in B}x_{n,b}\otimes b$ for each $n$. Since $(x_n)$ is Cauchy we have that for each $k\in \N$ there is $N\in \N$ such that $\forall n,m>N$ we have $x_n-x_m\in F_kA\otimes C$, i.e.
\[\sum_{b\in B}(x_{n,b}-x_{m,b})\otimes b \in F_kA\otimes C.\] 
Thus we find that the sequences $(x_{n,b})_{n\in \N}$ are Cauchy for each $b\in B$. By completeness of $A$ these last sequences converge to elements $x_b\in A$, i.e. for each $b\in B$ and $k\in \N$ there is $N\in\N$ such that for all $n
>N$ we have $x_b-x_{n,b}\in F_kA$. Thus by finiteness of the set $B$ we find that the sequence $(x_n)$ converges to $x=\sum_{b\in B}x_b\otimes b$. 
\end{proof}


\section{MC elements and twisting}\label{sec4}
As noted above, a shifted $A_\infty$-algebra is defined by a codifferential on the coaugmented cofree conilpotent coassociative coalgebra cogenerated by a $\Z$-graded vector space $A$. This last coalgebra is canonically a bialgebra for the shuffle product.
Note that the shuffle product automatically respects a filtration induced by a filtration on $A$. So if $(A,Q)$ is a complete shifted $A_\infty$-algebra then we also have a bialgebra structure and codifferential on $\widehat{T^c}(A)$, again in the complete sense, see appendix \ref{A}. From now on we will assume completions whenever necessary, but we will not differentiate in notation between maps and their unique extension to completions. 

\begin{definition}[exponential] 
The exponential map 
\[e^{-}\colon A^0\longrightarrow \widehat{T^c}(A)\] is defined as $e^x=\displaystyle\lim_{n\rightarrow \infty} \sum_{k=0}^n x^k$. 
\end{definition}

Note that, letting $\mu_{sh}^{(k)}$ denote the $k-1$th iteration $\mu_{sh}(\mu_{sh}\otimes \id)\ldots(\mu_{sh}\otimes \id^{\otimes k-2})$ for $k\geq 2$, $\mu_{sh}^{(0)}=1$ and $\mu_{sh}^{(1)}=\id$, we could write $e^x=\sum_{k=0}^\infty\frac{1}{k!}\mu_{sh}^{(k)}(x^{\otimes k})$, however the definition above makes perfect sense over a field of arbitrary characteristic.  
\begin{lemma}\label{exp(x)}
The operation $\exp(x)\colon \widehat{T^c}(A)\longrightarrow \widehat{T^c}(A)$ given by 
\[y\mapsto \mu_{sh}(e^x\otimes y)\] for some element $x\in A^0$ defines a coalgebra automorphism with inverse $\exp(-x)$. 
\end{lemma}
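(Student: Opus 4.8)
The plan is to reduce the whole statement to one structural fact: the element $e^x$ is \emph{group-like} in the completed bialgebra $\widehat{T^c}(A)$, meaning $\Delta(e^x)=e^x\otimes e^x$ and $\epsilon(e^x)=1$. Once this is in hand, being a coalgebra morphism and being invertible both fall out quickly. So first I would establish group-likeness. Writing $x^k$ for the $k$-fold concatenation $x\otimes\cdots\otimes x\in A^{\otimes k}$, the deconcatenation coproduct gives $\Delta(x^n)=\sum_{i=0}^n x^i\otimes x^{n-i}$; summing over $n$ and reindexing the double sum as independent indices yields $\Delta(e^x)=\big(\sum_i x^i\big)\otimes\big(\sum_j x^j\big)=e^x\otimes e^x$, while the counit isolates the weight-zero part, giving $\epsilon(e^x)=1$. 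All of this takes place in the completion, where the series converges because $x\in A=F^1A$ forces $x^k\in F^kT^c(A)$.

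Second, I would show $\exp(x)$ is a morphism of coalgebras. Applying the Hopf compatibility relation $\Delta\circ\mu_{sh}=(\mu_{sh}\otimes\mu_{sh})\circ(\id\otimes\tau\otimes\id)\circ(\Delta\otimes\Delta)$ to $e^x\otimes y$ and substituting $\Delta(e^x)=e^x\otimes e^x$, the flip $\tau$ moves $e^x$ past the first Sweedler factor of $y$; since $e^x$ is concentrated in degree $0$ the Koszul sign is trivial, and writing $\Delta(y)=y_{(1)}\otimes y_{(2)}$ one reads off $\Delta(\exp(x)(y))=\exp(x)(y_{(1)})\otimes\exp(x)(y_{(2)})$, i.e. $\Delta\circ\exp(x)=(\exp(x)\otimes\exp(x))\circ\Delta$. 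Compatibility with the counit follows from $\epsilon(e^x)=1$ together with the fact that $\mu_{sh}$ is itself a coalgebra map, so that $\epsilon\circ\mu_{sh}=\epsilon\otimes\epsilon$.

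Third, I would verify that $\exp(-x)$ is a two-sided inverse. By associativity of the shuffle product, $\exp(-x)\big(\exp(x)(y)\big)=\mu_{sh}\big(\mu_{sh}(e^{-x}\otimes e^x)\otimes y\big)$, so it suffices to check $\mu_{sh}(e^{-x}\otimes e^x)=1$. Here $e^{-x}=\sum_k(-1)^k x^k$, and because $x$ has degree $0$ one has $\mu_{sh}(x^i\otimes x^j)=\binom{i+j}{i}x^{i+j}$, all $\binom{i+j}{i}$ shuffles producing the same word with trivial sign. Collecting the coefficient of $x^n$ gives $\sum_{i=0}^n(-1)^i\binom{n}{i}=(1-1)^n$, which vanishes for $n\geq1$ and equals $1$ for $n=0$; hence $\mu_{sh}(e^{-x}\otimes e^x)=1$, and unitality of $\mu_{sh}$ gives $\exp(-x)\circ\exp(x)=\id$, with the reverse composite handled symmetrically. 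Combined with the previous step, $\exp(x)$ is a bijective coalgebra morphism whose inverse $\exp(-x)$ is again a coalgebra morphism, hence an automorphism.

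I expect the only real subtlety to be bookkeeping rather than a single hard obstacle. The two points needing care are: (i) that all manipulations are legitimate in $\widehat{T^c}(A)$, which is fine since the relevant series converge in the filtration topology and $\mu_{sh}$, $\Delta$ extend continuously, as recorded in the excerpt; and (ii) that the degree-$0$ hypothesis on $x$ is exactly what trivializes every Koszul sign, both in the group-likeness computation and in the identity $\mu_{sh}(x^i\otimes x^j)=\binom{i+j}{i}x^{i+j}$. The characteristic-$p$ case requires no extra work, since the vanishing of $\sum_{i}(-1)^i\binom{n}{i}$ is an integer identity and survives reduction modulo $p$; this is precisely why the characteristic-free definition $e^x=\sum_k x^k$ is the correct one to use here.
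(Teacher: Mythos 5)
Your proof is correct and follows essentially the same route as the paper's: establish that $e^x$ is group-like, invoke the Hopf compatibility relation to get the coalgebra morphism property, and verify $\mu_{sh}(e^{-x}\otimes e^x)=1$ to obtain the inverse. You simply supply details the paper leaves implicit (the binomial identity $\sum_i(-1)^i\binom{n}{i}=\delta_{n,0}$, the counit check, and the convergence bookkeeping), all of which are accurate.
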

\begin{proof}\
\leavevmode 

\noindent 

First of all note that because of the Hopf compatibility relation the multiplication by $e^x$ defines an endomorphism of $\widehat{T^c}(V)$. To show that it is an automorphism first note that $e^x$ satisfies
\[e^x\otimes e^x=\lim_{N\rightarrow \infty} \sum_{n=0}^N\sum_{k=0}^n x^k\otimes x^{n-k}=\lim_{N\rightarrow \infty} \sum_{n=0}^N\Delta(x^n)=\Delta(e^x),\]
i.e. it is a group-like element. 

Similarly we note that 
\[\mu_{sh}(e^x\otimes e^{-x})=\lim_{N\rightarrow \infty}\mu_{sh}(\sum_{k=0}^n\sum_{l=0}^k(-1)^lx^{k-l}\otimes x^l)=1,\]
i.e. $e^x$ is invertible with inverse $e^{-x}$. 

These two facts show that the map $\exp(x)$ defines a coalgebra automorphism of $\widehat{T^c}(A)$ with inverse $\exp(-x)$. 
\end{proof}

\begin{lemma}\label{well-twist}
The operation $Q^x=\exp(-x)\circ Q\circ \exp(x)\colon\widehat{T^c}(A)\longrightarrow \widehat{T^c}(A)$ preserves the subspace $T^c(A)$ 
\end{lemma}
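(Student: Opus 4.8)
The plan is to recognize $Q^x$ as a coderivation of the completed tensor coalgebra $\widehat{T^c}(A)$ and then to exploit the fact that a coderivation is reconstructed from its corestriction by a formula that is finite on finite tensors. The whole point of the lemma is that, although neither $\exp(x)$ nor $\exp(-x)$ preserves $T^c(A)$ on its own (multiplication by $e^x$ manifestly produces infinite sums), their conjugate sandwich of the codifferential $Q$ does.

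First I would check that $Q^x$ is a coderivation. By Lemma \ref{exp(x)} the maps $\exp(\pm x)$ are mutually inverse coalgebra automorphisms of $\widehat{T^c}(A)$, so each intertwines the coproduct, i.e. $\Delta\exp(\pm x)=(\exp(\pm x)\otimes\exp(\pm x))\Delta$. Starting from the coderivation identity $\Delta Q=(Q\otimes\id+\id\otimes Q)\Delta$ and pushing the automorphisms through $\Delta$ on both sides, the outer copies of $\exp(\pm x)$ cancel in pairs and one is left with $\Delta Q^x=(Q^x\otimes\id+\id\otimes Q^x)\Delta$. Hence $Q^x$ is a coderivation of $\widehat{T^c}(A)$; note that this step uses only that $Q$ is a coderivation, not that $Q^2=0$.

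Next I would use that a coderivation of $\widehat{T^c}(A)$ is uniquely determined by its corestriction $pr_A\circ Q^x$ and is recovered from the components $Q^x_n=pr_A\circ Q^x|_{A^{\otimes n}}$ via the universal coderivation formula. Here completeness of $A$ is what guarantees that these components are genuine maps $A^{\otimes n}\to A$: the weight-one part of an element of $\widehat{T^c}(A)$ a priori lies in the completion of $A$, but by hypothesis this completion is $A$ itself. Evaluating the formula on a finite tensor $v_1\cdots v_n$, the curvature term $\mu_{sh}(Q^x_0(1),v_1\cdots v_n)$ shuffles the single element $Q^x_0(1)\in A$ into the word $v_1\cdots v_n$ and so contributes exactly $n+1$ summands in $A^{\otimes(n+1)}$, while the double sum $\sum_{p=1}^n\sum_{i=0}^{n-p}v_1\cdots v_i\,Q^x_p(v_{i+1}\cdots v_{i+p})\,v_{i+p+1}\cdots v_n$ is a finite sum of elements of $T^c(A)$. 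Thus $Q^x(v_1\cdots v_n)\in T^c(A)$, and since such tensors span $T^c(A)$, the claim follows.

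The step I expect to require the most care is the middle one: one must be sure both that $Q^x$ really is a coderivation in the completed sense and that its corestriction lands in the un-completed cogenerators $A$, so that the reconstruction formula applies verbatim and yields finite sums on finite tensors. This is precisely where the completeness hypothesis on $A$ and the fact that the curvature contribution $Q^x_0(1)$ is a single element of $A$ (rather than an unbounded tensor) are indispensable; granting these, the finiteness that proves the lemma is immediate.
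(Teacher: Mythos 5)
Your proposal is correct and is essentially the paper's own argument: both first observe that $Q^x$ is a coderivation of $\widehat{T^c}(A)$ (Lemma \ref{exp(x)}), then use that such a coderivation is pinned down by components $A^{\otimes n}\to A$ --- well defined by completeness of $A$ --- and that the coderivation reconstruction formula produces only finite sums when evaluated on finite tensors. The only real difference is cosmetic: the paper names the components explicitly, $\tilde{Q}^x_n(a_1\cdots a_n)=\sum_{p\geq 0}Q_{n+p}(\mu_{sh}(x^p\otimes a_1\cdots a_n))$ (a formula it reuses later as the explicit twisted structure), while you take them abstractly as $pr_A\circ Q^x|_{A^{\otimes n}}$; the delicate point you flag --- that the reconstruction formula is valid for coderivations of the completed coalgebra --- is precisely the step the paper compresses into ``clearly the restriction of $Q^x$ coincides with $\tilde{Q}^x$.''
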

\begin{proof}
\leavevmode

\noindent 

Note that $Q^x$ defines a codifferential by lemma \ref{exp(x)}. Now consider the coderivation $\tilde{Q}^x$ given by the sequence of degree $+1$ maps 
\[\tilde{Q}^x_n\colon A^{\otimes n}\longrightarrow A\]
defined by \[\tilde{Q}^x_n(a_1\cdots a_n)=\sum_{p\geq 0} Q_{n+p}(\mu_{sh}(x^p\otimes a_1\cdots a_n)).\] These are all well-defined by completeness of $A$. Clearly the restriction of $Q^x$ coincides with $\tilde{Q}^x$, which proves the lemma.  
\end{proof}

\begin{definition}[Twisting]
Lemmas \ref{exp(x)} and \ref{well-twist} allow us to twist a shifted $A_\infty$-algebra $(A,Q)$ to the shifted $A_\infty$-algebra $(A^x, Q^x)$, often denoted simply $A^x$, by an element $x\in A^0$. Namely we set $A^x=A$ and 
$Q^x=\exp(-x)\circ Q\circ \exp(x)$ restricted to $T^c(A)$.
\end{definition}

Note that the formula for $\tilde{Q}^x_n(a_1\cdots a_n)$ in the proof of Lemma \ref{well-twist} provides an explicit description of the twisted $A_\infty$-algebra structure.

\begin{definition}[Curvature of an Element]
The curvature of an element $x\in A^0$ in a shifted $A_\infty$-algebra $(A,Q)$ is the element 
\[\mathcal{R}(x):=\exp(-x)Q(e^x)\]
Note, since clearly $\mathcal{R}(x)=Q^x(1)$, it holds that $\mathcal{R}(x)\in A^1\subset T^c(A)$. Thus, since it will not contain any terms of tensor weight higher than $1$, it can be seen that
\[\mathcal{R}(x)=\sum_{l\geq 0}Q_l(x^l)\]
and that $\mathcal{R}(x)$ is simply the curvature of $A^x$.
\end{definition}

\begin{lemma}[Bianchi Identity]\label{Bianchi}
For a shifted $A_\infty$-algebra $(A,Q)$ and $x\in A^0$ we have 
\[Q^x(\mathcal{R}(x))=0\]
or equivalently 
\[\sum_{l\geq 1}Q_l(\mu_{sh}(x^{l-1}\otimes \mathcal{R}(x)))=0.\]
\end{lemma}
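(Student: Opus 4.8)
The plan is to deduce the identity from the single structural fact that the twisted operator $Q^x$ is again a codifferential. By Lemma~\ref{well-twist} (together with Lemma~\ref{exp(x)}) the operator $Q^x = \exp(-x)\circ Q\circ \exp(x)$ restricts to a codifferential on $T^c(A)$, so in particular $(Q^x)^2 = 0$. On the other hand, by the definition of the curvature of an element we have $\mathcal{R}(x) = Q^x(1)$. Hence
\[
Q^x(\mathcal{R}(x)) = Q^x(Q^x(1)) = (Q^x)^2(1) = 0,
\]
which is precisely the first form of the Bianchi identity. This is the conceptual heart of the statement and requires essentially no computation.

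It then remains to show that the second, explicit form is equivalent to the first. First I would record that $\mathcal{R}(x)$ sits in tensor weight $1$, so that applying the coderivation $Q^x$ to it through the coderivation formula recalled earlier produces only two homogeneous components: a weight-$2$ component coming from the curvature term $(Q^x)_0(1) = \mathcal{R}(x)$, and a weight-$1$ component coming from $(Q^x)_1$. Explicitly,
\[
Q^x(\mathcal{R}(x)) = \mu_{sh}\bigl(\mathcal{R}(x)\otimes \mathcal{R}(x)\bigr) + (Q^x)_1(\mathcal{R}(x)).
\]
The weight-$2$ term vanishes automatically: since $\mathcal{R}(x)\in A^1$ is of odd degree, the shuffle of the weight-one element $\mathcal{R}(x)$ with itself is $\mu_{sh}(\mathcal{R}(x)\otimes\mathcal{R}(x)) = \mathcal{R}(x)\otimes\mathcal{R}(x) - \mathcal{R}(x)\otimes\mathcal{R}(x) = 0$. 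Therefore the vanishing of $Q^x(\mathcal{R}(x))$ is equivalent to the vanishing of its weight-$1$ part $(Q^x)_1(\mathcal{R}(x))$.

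Finally I would identify this weight-$1$ part with the claimed sum. Using the explicit description of the twisted coderivation obtained in the proof of Lemma~\ref{well-twist}, namely $\tilde{Q}^x_n(a_1\cdots a_n) = \sum_{p\geq 0} Q_{n+p}(\mu_{sh}(x^p\otimes a_1\cdots a_n))$, applied with $n=1$ and $a_1 = \mathcal{R}(x)$, I obtain
\[
(Q^x)_1(\mathcal{R}(x)) = \sum_{p\geq 0} Q_{1+p}\bigl(\mu_{sh}(x^p\otimes \mathcal{R}(x))\bigr) = \sum_{l\geq 1} Q_l\bigl(\mu_{sh}(x^{l-1}\otimes \mathcal{R}(x))\bigr),
\]
after the reindexing $l = p+1$. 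Combined with the previous paragraph, this shows that the explicit sum is exactly $Q^x(\mathcal{R}(x))$ and hence establishes the equivalence of the two formulations.

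I expect the only genuine point of care to be the bookkeeping for the \emph{curved} coderivation formula: one must not forget that $Q^x$ carries a nonzero weight-zero component (the curvature itself), so that evaluating it on a single cogenerator produces the extra weight-$2$ insertion term, and then verify that this term cancels purely by the oddness of $\mathcal{R}(x)$. Everything else is a direct substitution of formulas already established in Lemmas~\ref{exp(x)} and~\ref{well-twist}, so the argument is short.
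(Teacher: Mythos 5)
Your proof is correct and follows essentially the same route as the paper's: the first identity comes from conjugating $Q^2=0$ (your $(Q^x)^2(1)=0$ is exactly the paper's $\exp(-x)Q^2(e^x)=0$ repackaged via Lemma~\ref{well-twist}), and the equivalence with the explicit sum comes from observing that $Q^x(\mathcal{R}(x))$ lies in $A$ with weight-one component $\tilde{Q}^x_1(\mathcal{R}(x))$. Your explicit verification that the weight-two term $\mu_{sh}(\mathcal{R}(x)\otimes\mathcal{R}(x))$ vanishes by oddness of $\mathcal{R}(x)$ is precisely the content behind the paper's terse remark that the expression ``must be an element of $A$,'' so you have simply spelled out the step the paper leaves to the reader.
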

\begin{proof} 
\leavevmode

\noindent 

Note that the first identity is obvious since \[Q^x(\mathcal{R}(x))=\exp(-x)Q(\exp(x)\exp(-x)Q(e^x))=\exp(-x)Q^2(e^x)=0. \] Thus it is only left to show that it is equivalent to the second identity. This follows straightforwardly by considering the expression $Q^x(\mathcal{R}(x))=\mu_{sh}(e^{-x}\otimes Q(\mu_{sh}(e^x\otimes \mathcal{R}(x))))$ and realizing that this must be an element of $A$. 
\end{proof}
\begin{definition}[Maurer-Cartan elements] Consider a shifted $A_\infty$-algebra $(A,Q)$. Then $x\in A^0$ is called a Maurer-Cartan element (abbreviated to MC element) if 
\[Q(e^x)=0.\]   
\end{definition}
\begin{corollary}
By invertibility of $e^{x}$ an element $x\in A^0$ is an 
MC element if and only if $\mathcal{R}(x)=0$. So, given a shifted $A_\infty$-algebra $A$ and an element $x\in A^0$, the twisted algebra $A^x$ is flat if and only if $x$ is a Maurer-Cartan element. 
\end{corollary}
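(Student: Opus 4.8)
The plan is to establish the two stated biconditionals by chaining two equivalences: first that $x\in A^0$ is a Maurer-Cartan element exactly when $\mathcal{R}(x)=0$, and then that $\mathcal{R}(x)=0$ is precisely the assertion that the twisted algebra $A^x$ is flat. Both steps rest on facts already recorded above, so the argument amounts to assembling them carefully.

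For the first equivalence I would start from the definition $\mathcal{R}(x)=\exp(-x)Q(e^x)$ and invoke Lemma~\ref{exp(x)}, which says that $\exp(-x)$ is a coalgebra automorphism of $\widehat{T^c}(A)$ with inverse $\exp(x)$. In particular $\exp(-x)$ is a bijection fixing $0$, so $\exp(-x)$ applied to $Q(e^x)$ vanishes if and only if $Q(e^x)$ itself vanishes. Since by definition $x$ is a Maurer-Cartan element precisely when $Q(e^x)=0$, this gives that $x$ is a Maurer-Cartan element if and only if $\mathcal{R}(x)=0$.

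For the second equivalence I would recall the observation recorded in the definition of the curvature of an element, namely that $\mathcal{R}(x)=Q^x(1)$ and that this is exactly the curvature of the twisted algebra $A^x$. By the definition of flatness a shifted $A_\infty$-algebra has $Q_0=0$, equivalently $Q(1)=0$; applied to $A^x$ this reads $Q^x(1)=0$. Hence $A^x$ is flat if and only if $\mathcal{R}(x)=Q^x(1)=0$, and combining this with the first equivalence yields that $A^x$ is flat if and only if $x$ is a Maurer-Cartan element.

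I expect no substantial obstacle here: every ingredient — the automorphism property of $\exp(\pm x)$ from Lemma~\ref{exp(x)}, the identity $\mathcal{R}(x)=Q^x(1)$ established when twisting in Lemma~\ref{well-twist}, and the definition of flatness — is already in place. The only point requiring a little care is to phrase the first step as an application of the bijectivity of $\exp(-x)$, rather than merely its well-definedness, so that the implication genuinely runs in both directions.
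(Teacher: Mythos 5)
Your proposal is correct and follows essentially the same route as the paper: the first equivalence is exactly the paper's appeal to the invertibility of $e^x$ (via Lemma~\ref{exp(x)}), and the second is the observation $\mathcal{R}(x)=Q^x(1)$ recorded with the definition of the curvature of an element, combined with the definition of flatness. No gaps; the argument is the intended one.
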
 

\begin{definition}[Functoriality of Maurer-Cartan elements and twisting]
Given an $\infty$-mor\-phism $F$ of complete shifted $A_\infty$-algebras $(A,Q)$ and $(B,P)$ and an element $x\in A^0$ we  define the element $x_F\in B^0$ called $F$-associated to $x$  as the solution to 
\[e^{x_F}=F(e^x)\] explicitly we have \[x_F:=\sum_{n\geq 1}F_n(x^n).\]
Note that $x_F$ is well-defined by completeness of $B$.
This way we may also define the $\infty$-morphism $F^x\colon A^x\rightarrow B^{x_F}$ as 
\[F^x:=\exp(-x_F)\circ F\circ \exp(x).\] Note finally that if $x$ is a Maurer-Cartan element then so is $x_F$ and, since it can be shown that $x_{F\circ \tilde{F}}=(x_{\tilde{F}})_F$, we obtain a functor associating to an $A_\infty$-algebra its set of Maurer-Cartan elements. 
\end{definition}

\begin{remark}
Note that if $F$ is assumed to be strict in the definition above then $x_F=F(x)$ and $F^x=F$. 
\end{remark}

\section{Cochains on simplices}

In this section we recall the definition of the simplicial standard simplex and  prove some technical results about the normalized cochains on the standard simplex. These technical results will be important in the next sections where we use them to construct the Maurer-Cartan simplicial set and prove that it is a Kan complex.


Recall that the standard $n$-simplex $\Delta^n$ is defined as the simplicial set whose $m$ simplices are given by sequences $(a_0,...,a_m)$, such that $a_i \in \{0,...,n\}$ and $a_i \leq a_{i+1}$ for all $i$. The collection of standard simplices forms a cosimplicial object in the category of simplicial sets. To define the coface and codegeneracy maps we define the following maps 
\[
d^i:\{0,...,n-1\} \rightarrow \{0,..,n\}, \mbox{ for $0\leq i \leq n$,}
\]
\[
d^{i}(k)=
\begin{cases}
k, & \mbox{if } k<i \\
k+1, & \mbox{if } i\leq k.
\end{cases}
\]
and
\[
s^j:\{0,...,n+1\} \rightarrow \{0,...,n\}, \mbox{      for  $0\leq j \leq n$.}
\]
\[
s^{j}(k)=
\begin{cases}
k, & \mbox{if } k<j \\
k-1, & \mbox{if } j\leq k.
\end{cases}
\]
The maps $d^i$ induce the coface maps which we will also denote by $d^i$. On $(a_0,...,a_k)\in \Delta^{n-1}$ the coface map $d^i$ is defined by $d^i(a_0,...,a_k):=(d^i(a_0),...,d^i(a_k))$. The codegeneracy maps $s^j$ are induced by the maps $s^j$ and by abuse of notation we will also denote these maps by $s^j$. If $(a_0,...,a_k)\in \Delta^{n+1}$ then $s^j(a_0,...,a_k):=(s^j(a_0),...,s^j(a_k))$. 

With the maps $d^i$ and $s^j$ the collection of standard simplices forms a cosimplicial object in the category of simplicial sets, we will denote this cosimplicial object by $\Delta^{\bullet}$. For more details see for example \cite{Curtis} or \cite{GoerssJardine}.




The normalized cochains on the simplex $\Delta^n$ with coefficients in the field $\K$, are denoted by $N^\bullet (\Delta^n)$. A basis for the degree $d$ part of the normalized cochains of $\Delta^n$ is given by the functions $\varphi_{i_0,...,i_d}$, for $0\leq i_0<...<i_d\leq n$, where $\varphi_{i_0,...,i_d}$ is the function that evaluates to $1$ on the $d$-dimensional subsimplex of $\Delta^n$ with vertices $i_0,...,i_d$ and is zero otherwise.

Since the normalized cochains are  a contravariant functor, the collection of normalized cochains $\{N^{\bullet}(\Delta^n)\}_{n\geq 0}$ forms a simplicial object in category of  chain complexes. The face maps $\partial_i$ and the degeneracy maps $\sigma_j$  of this simplicial object are defined by precomposing with the maps $d^i$ and $s^j$. More explicitly, let $\varphi_I\in N^{k}(\Delta^n)$ with $I=(i_1,...,i_k)$ then we define the face map $\partial_i$  by the formula $\partial_i(\varphi_I):=\sum_{J \in (d^i)^{-1}(I)}\varphi_{J}$, where the sum ranges over all elements of the inverse image of  $I$ under the map $d^i$, if the inverse image is empty we will define this term to be zero. Because the map $d^i$ is injective the inverse image will consist out of at most one term, for simplicity we will denote this term by $\varphi_{(d^i)^{-1}(I)}$. The degeneracy maps are defined similarly by $\sigma_j(\varphi_I):=\sum_
{J \in (s^j)^{-1}(I)}\varphi_{J}$.


It is a well known fact that the normalized cochains on a simplicial set with the cup product are an associative algebra. In fact they are an  $E_\infty$-algebra, but this  $E_\infty$-structure will be irrelevant for the rest of this paper.  Because this associative algebra structure is functorial, it turns out that the normalized cochains on the simplices also form a simplicial object in the category of associative algebras. We will now recall the formulas for the cup product on the standard simplex. For more details see for example \cite{Hatcher}.

Let $\phi \in N^i(\Delta^n)$ and $ \psi \in N^j(\Delta^n)$, and let $x:\Delta^{i+j} \rightarrow \Delta^n$ be a non-degenerate subsimplex of $\Delta^n$ of dimension $i+j$. The cup product $\cup:N^\bullet(\Delta^n) \otimes N^\bullet(\Delta^n) \rightarrow N^\bullet(\Delta^n) $ of $\phi$ and $\psi$ evaluated on $x$ is then defined by 
\begin{equation}\label{eq:cupproduct}
\phi \cup \psi(x)=\phi(x(0,...,i))\cdot \psi(x(i,...,i+j)),
\end{equation}
where $x(0,...,i)$ is the image of the subsimplex of $\Delta^{i+j}$ with vertices $0,...,i$ under the map $x$. With the cup-product the normalized cochains on the $n$-simplex $\Delta^n$ are turned into a unital associative algebra. The unit  is given by $\mathbbm{1}:=\sum_{i=0}^n \varphi_i$. 






The evaluation on $e_i$, the $i$th vertex of $\Delta^n$, will play a special role in what follows and we will therefore denote it by $\tilde{\epsilon}^i_n:N^\bullet(\Delta^n) \rightarrow \K$. We also define the map $\epsilon^i_n:N^\bullet(\Delta^n) \rightarrow N^\bullet(\Delta^n)$, which is defined as the composition of $\tilde{\epsilon}^i_n$ with the inclusion of the unit $\mathbbm{1}:\K \rightarrow N^\bullet (\Delta^n)$. More explicitly this map is given by $\epsilon^i_n(\varphi_i)=\mathbbm{1}=\sum_{j=1}^n\varphi_j$ and zero otherwise.   To prove that the Maurer-Cartan simplicial set, which we will define in the next section, is a Kan complex we need an explicit contraction on the level of the normalized cochains of $\Delta^n$. To do this we define a contraction between the identity on  $N^\bullet(\Delta^n)$ and $\epsilon^i_n$.

The contraction
\[
h_n^i:N^\bullet(\Delta^n) \longrightarrow N^{\bullet-1}(\Delta^n)
\]
 is defined by 
\[
h^i_n(\varphi_{j_0,...,j_d}):=\sum_{k=0}^d (-1)^i\delta_{i,j_k}\varphi_{j_0,...,\hat{j_k},...,j_d},
\]
where $\hat{j_k}$ means that we omit the element $j_k$ and $\delta_{i,j_k}$ is the Kronecker delta.

\begin{lemma}\label{dh+hd}
The map $h^i_n$ is a contraction between $\id_{N^{\bullet}(\Delta^{n})}$ and $\epsilon^i_n$, i.e. $h^i_n$ satisfies the following formula
\[
d h^i_n+h^i_n d=\id_{N^{\bullet}(\Delta^n)}-\epsilon^i_n.
\]
\end{lemma}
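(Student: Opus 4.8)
The plan is to verify the chain-homotopy identity directly on the basis $\{\varphi_{j_0,\ldots,j_d}\}$ of $N^\bullet(\Delta^n)$, after recasting both $d$ and $h^i_n$ as signed ``insert a vertex'' and ``delete the vertex $i$'' operators, so that the two composites $dh^i_n$ and $h^i_n d$ can be compared term by term. First I would record the explicit coboundary dual to the simplicial boundary: writing $J=(j_0<\cdots<j_d)$,
\[
d\varphi_{J}=\sum_{a\notin J}(-1)^{p(a)}\varphi_{J\cup\{a\}},
\]
where $a$ ranges over the vertices $0\le a\le n$ not already in $J$ and $p(a)$ is the position that $a$ occupies in the ordered sequence $J\cup\{a\}$. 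The operator $h^i_n$ does the opposite: it deletes the vertex $i$ whenever $i\in J$, returning $\pm\varphi_{J\setminus\{i\}}$ with a sign governed by the position of $i$, and it returns $0$ when $i\notin J$.

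With both maps in this form, fix $J$ and suppose first that $i\in J$ and that $J$ has dimension at least $1$. In $dh^i_n(\varphi_J)$ we delete $i$ and then reinsert some vertex $a$, while in $h^i_n d(\varphi_J)$ we insert some $a\notin J$ and then delete $i$; both composites produce the basis elements $\varphi_{(J\setminus\{i\})\cup\{a\}}$ with $a\ne i$. The key point is that each such element occurs once in each composite with \emph{opposite} total sign, so these terms cancel in pairs. To see the sign flip I would split into $a<i$ and $a>i$ and use the elementary fact that removing $i$ lowers the index of every vertex exceeding $i$ by one while fixing the smaller vertices; this changes the relevant position by $0$ or $1$ in exactly the way needed. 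The only unpaired contribution is the reinsertion of $i$ itself inside $dh^i_n$, whose insertion- and deletion-signs coincide and which therefore yields $+\varphi_J$. Hence $(dh^i_n+h^i_n d)\varphi_J=\varphi_J=(\id-\epsilon^i_n)\varphi_J$, since $\epsilon^i_n\varphi_J=0$ for such $J$.

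Two boundary cases remain. If $i\notin J$ then $h^i_n\varphi_J=0$, so only $h^i_n d$ survives; among all insertions in $d\varphi_J$ only the one that inserts the missing vertex $i$ is not annihilated by the subsequent deletion, and it returns $+\varphi_J$, again matching $(\id-\epsilon^i_n)\varphi_J$. The genuinely different case is the zero-cochain $\varphi_i$, where $J=(i)$: here $h^i_n\varphi_i$ lands in $N^{-1}(\Delta^n)=0$, so $dh^i_n$ contributes nothing and only $h^i_n d$ acts. Computing $d\varphi_i=\sum_{a<i}\varphi_{(a,i)}-\sum_{a>i}\varphi_{(i,a)}$ and then deleting $i$ gives $-\sum_{a\ne i}\varphi_a=\varphi_i-\mathbbm{1}$, which is precisely $(\id-\epsilon^i_n)\varphi_i$. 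Collecting the three cases proves the identity on every basis element, hence on all of $N^\bullet(\Delta^n)$.

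I expect the only real obstacle to be the sign bookkeeping in the cancellation step of the second paragraph: one must confirm that the position-sign of an inserted vertex and the position-sign of the deleted vertex $i$ always combine to opposite overall signs in $dh^i_n$ versus $h^i_n d$. This reduces to the index-shift observation above and is otherwise routine. For the pairing to come out correctly the factor $(-1)^i\delta_{i,j_k}$ in the definition of $h^i_n$ should be read so that the surviving sign is that of the \emph{position} of $i$ within $(j_0,\ldots,j_d)$; with that reading the insert/delete signs match up exactly as described.
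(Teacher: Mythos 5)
Your proof is correct, and since the paper explicitly leaves this lemma to the reader (``straightforward but tedious''), your direct verification on the basis $\{\varphi_J\}$ is precisely the intended argument: the three cases you treat ($i\in J$ with $\dim\ge 1$, $i\notin J$, and $J=(i)$) exhaust all basis elements, and your sign analysis in the cancellation step is right --- for $a<i$ deleting $i$ fixes the position of $a$ while inserting $a$ shifts the position of $i$ by one, and for $a>i$ the roles reverse, so the two composites always contribute opposite signs. One point in your final paragraph deserves emphasis: reading the sign in $h^i_n$ as the \emph{position} sign $(-1)^k$ (where $i=j_k$) rather than the literal constant $(-1)^i$ is not an optional interpretive choice but a necessary correction to the paper's formula. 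With the constant sign the identity actually fails: take $n=2$, $i=2$, $J=(1,2)$; then $h^2_2\varphi_{(1,2)}=\varphi_1$, so
\[
d h^2_2\varphi_{(1,2)}+h^2_2 d\varphi_{(1,2)}=\bigl(\varphi_{(0,1)}-\varphi_{(1,2)}\bigr)+\varphi_{(0,1)}=2\varphi_{(0,1)}-\varphi_{(1,2)}\neq \varphi_{(1,2)}=\bigl(\id-\epsilon^2_2\bigr)\varphi_{(1,2)},
\]
whereas with the position sign one gets $-\varphi_{(0,1)}+\varphi_{(1,2)}+\varphi_{(0,1)}=\varphi_{(1,2)}$ as required. (The two conventions agree exactly when $i=0$, which is why small examples can be misleading here.) So your proof both establishes the lemma and pinpoints the sign convention under which the paper's definition of $h^i_n$ must be read.
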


\noindent The proof of the lemma is straightforward but tedious and is left to the reader.

If we have a complete $A_\infty$-algebra $(A,Q_0, Q_1,Q_2,...)$, then we can use the extension of scalars from Section \ref{sec:extensionofscalars} to form a simplicial object in the category of $A_\infty$-algebras. This object is given by $\{A\otimes N^\bullet (\Delta^n) \}_{n\geq 0}$, where the face and degeneracy maps are induced by the face and degeneracy maps of $\{N^\bullet (\Delta^n) \}_{n \geq 0}$. As in section \ref{sec:extensionofscalars} we use the notation $N_\bullet Q_n$  for the $A_\infty$-structure maps on $A \otimes N^\bullet(\Delta^n)$, we will often abuse notation by dropping the $\bullet$.
By using the contraction maps  $h^i_n:N^\bullet (\Delta^n) \rightarrow N^{\bullet-1} (\Delta^n)$ we can also define a ``contraction" of $A \otimes N^\bullet(\Delta^n)$. To do this we first introduce the maps $E^i_n:A \otimes N^\bullet(\Delta^n) \rightarrow A \otimes N^\bullet (\Delta^n)$, which are the analogs of evaluation on the $i$th vertex of $\Delta^n$. This map is  defined as $E_n^i:=\id_A \otimes \epsilon^i_n$. 

When the $A_\infty$-algebra $A$ is flat, then as a map of cochain complexes the map $E_n^i$ is homotopic to the identity and an explicit homotopy $H^i_n:A \otimes N^\bullet (\Delta^n)\rightarrow A \otimes N^{\bullet-1} (\Delta^n)$ is given by setting $H^i_n(a\otimes \varphi):=a \otimes h^i_n(\varphi)$, when $a\otimes \varphi \in A \otimes N^{\bullet}(\Delta^n)$. When $A$ is not flat we can no longer speak about maps of cochain complexes, but the maps $H^i_n$ still have the following property.

\begin{proposition}\label{homotopy} 
The maps $H_n^i$, $E^i_n$ and $\id_{A \otimes N^\bullet(\Delta^n)}$ satisfy the following equation 
\[
H^i_n NQ_1 + NQ_1 H^i_n= \id_{A \otimes N^\bullet(\Delta^n)} -E^i_n.
\]
\end{proposition}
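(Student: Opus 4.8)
The plan is to reduce the statement to Lemma \ref{dh+hd} by working entirely at the level of operators on $A \otimes N^\bullet(\Delta^n)$, so that no element-wise computation is needed. First I would record the three identifications coming from the definitions: by the extension of scalars formulas the degree $+1$ structure map is $NQ_1 = Q_1 \otimes \id + \id_A \otimes d$, where $d$ is the differential on $N^\bullet(\Delta^n)$; and by construction $H^i_n = \id_A \otimes h^i_n$ together with $E^i_n = \id_A \otimes \epsilon^i_n$. All three maps are thus tensor products of a map on $A$ with a map on $N^\bullet(\Delta^n)$, which is exactly what makes the graded interchange law available.

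Next I would expand $H^i_n NQ_1 + NQ_1 H^i_n$ by splitting $NQ_1$ into its two summands, yielding four terms, and evaluate each using the interchange law $(f_1 \otimes g_1)\circ(f_2 \otimes g_2) = (-1)^{|g_1||f_2|}(f_1 f_2)\otimes(g_1 g_2)$. The two terms carrying $Q_1 \otimes \id$ are $(\id_A \otimes h^i_n)(Q_1 \otimes \id)$ and $(Q_1 \otimes \id)(\id_A \otimes h^i_n)$. Since $h^i_n$ and $Q_1$ both have odd degree ($-1$ and $+1$), the interchange law produces the sign $(-1)^{|h^i_n||Q_1|} = -1$ in the first ordering and no sign in the second; these terms are therefore $-Q_1 \otimes h^i_n$ and $+Q_1 \otimes h^i_n$, and they cancel.

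The remaining two terms carry $\id_A \otimes d$. Because $\id_A$ has degree $0$, the interchange law produces no sign, and they combine to $\id_A \otimes (h^i_n d + d h^i_n)$. Applying Lemma \ref{dh+hd}, which states $d h^i_n + h^i_n d = \id_{N^\bullet(\Delta^n)} - \epsilon^i_n$, this becomes $\id_A \otimes (\id_{N^\bullet(\Delta^n)} - \epsilon^i_n) = \id_{A \otimes N^\bullet(\Delta^n)} - E^i_n$, which is precisely the asserted identity.

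The only point that requires genuine care is the sign bookkeeping in the cancellation of the $Q_1$ cross terms; everything else is formal once the three maps are written as tensor products. I would also flag one conceptual subtlety worth a sentence in the writeup: although $A$ need not be flat, so that the full codifferential carries a curvature term $Q_0$ and higher products $Q_k$, the proposition concerns only the single operator $NQ_1$, and neither the curvature nor the higher products enter the computation. In effect the identity records that the cochain-level contraction of Lemma \ref{dh+hd} survives tensoring with $A$, with the one potentially obstructing piece, the $Q_1 \otimes \id$ part of $NQ_1$, dropping out by the graded sign cancellation.
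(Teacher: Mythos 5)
Your proof is correct and is exactly the argument the paper intends: the paper gives no details, stating only that the proposition ``follows straightforwardly from Lemma \ref{dh+hd}'', and your computation (writing $NQ_1 = Q_1 \otimes \id + \id_A \otimes d$, cancelling the $Q_1$ cross terms via the Koszul interchange sign, and applying Lemma \ref{dh+hd} to the remaining $\id_A \otimes (dh^i_n + h^i_n d)$ term) is precisely that straightforward verification, carried out with the consistent sign conventions under which the statement holds.
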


The proof of this proposition follows straightforwardly from Lemma \ref{dh+hd}. For the proof that the Maurer-Cartan simplicial set is a Kan complex we need one more definition.  For each $0 \leq i \leq n $, define the operator $R^i_n:A \otimes N^\bullet(\Delta^n) \rightarrow A \otimes  N^\bullet (\Delta^n)$, as $R^i_n:=NQ_1 \circ H^i_n$. In the following lemmas we give some of the properties of the operators $R^i_n$ and $E^i_n$.

\begin{lemma}\label{EQ=QE}
The following identity holds 
\[E^i_nNQ_l(v)=NQ_l((E^i_n)^{\otimes l}v)\]
for all $n, l\geq 0$, $0\leq i\leq n$ and $v\in \left(A\otimes N^\bullet(\Delta^n)\right)^{\otimes l}$, where for $l=0$ we have used the convention that $(E^i_n)^{\otimes 0}=\id$.
\end{lemma}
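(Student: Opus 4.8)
The plan is to prove the identity $E^i_n NQ_l(v) = NQ_l\bigl((E^i_n)^{\otimes l} v\bigr)$ by reducing it to the single-vertex-evaluation statement $\epsilon^i_n$ is an algebra endomorphism of the cochains $N^\bullet(\Delta^n)$, and then tensoring with the structure maps $Q_l$ of $A$. First I would recall from the extension-of-scalars formulas in Section \ref{sec:extensionofscalars} that $NQ_l = Q_l \otimes \mu^{(l)}$ for $l \geq 3$, that $NQ_1 = Q_1\otimes\id + \id\otimes d$, that $NQ_2 = Q_2\otimes\mu$, and that $NQ_0(1) = Q_0(1)\otimes\mathbbm{1}$, where $\mu$ is the cup product on $N^\bullet(\Delta^n)$. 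Since $E^i_n = \id_A\otimes \epsilon^i_n$ and $(E^i_n)^{\otimes l}$ acts on $A^{\otimes l}\otimes N^\bullet(\Delta^n)^{\otimes l}$ as $\id_A^{\otimes l}\otimes (\epsilon^i_n)^{\otimes l}$ (after reordering the tensor factors via the Koszul sign rule, which I would note causes no trouble since $\epsilon^i_n$ lands in degree $0$), the $A$-part of the identity holds trivially, and everything reduces to showing the corresponding statement for the cochain factors.

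The key computational heart is therefore the claim that $\epsilon^i_n$ is a morphism of unital algebras, i.e. that
\[
\epsilon^i_n \circ \mu^{(l)} = \mu^{(l)}\circ (\epsilon^i_n)^{\otimes l}
\]
for the iterated cup product, together with the base cases $\epsilon^i_n(\mathbbm{1}) = \mathbbm{1}$ (handling $l=0$ via $Q_0(1)\otimes\mathbbm{1}$) and $\epsilon^i_n \circ d = d\circ \epsilon^i_n$ (the extra summand in $NQ_1$). I would establish the multiplicativity by recalling that $\epsilon^i_n = \tilde\epsilon^i_n$ followed by the inclusion of $\mathbbm{1}$, so that on the basis $\epsilon^i_n(\varphi_{j_0,\dots,j_d})$ is nonzero only on the vertex $\varphi_i$, where it returns $\mathbbm{1}$. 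Using the cup-product formula \eqref{eq:cupproduct}, evaluation at the single vertex $e_i$ factors as a product of evaluations at $e_i$ (each $x(0,\dots,0)=e_i$ when $x$ is the constant subsimplex at $i$), which is exactly the statement that $\tilde\epsilon^i_n$ is an algebra homomorphism $N^\bullet(\Delta^n)\to\K$; composing with the unit inclusion then gives multiplicativity of $\epsilon^i_n$ and hence the $l\geq 2$ cases. The compatibility with the differential $d$ is immediate since $\epsilon^i_n$ is built from the degree-$0$ evaluation map and the cochain differential vanishes on the $0$-cochain $\varphi_i$.

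The main obstacle I anticipate is purely bookkeeping rather than conceptual: tracking the Koszul signs introduced when commuting $(E^i_n)^{\otimes l}$ past the shuffle isomorphism $(A\otimes N^\bullet(\Delta^n))^{\otimes l}\cong A^{\otimes l}\otimes N^\bullet(\Delta^n)^{\otimes l}$, and verifying that these signs match on both sides of the identity. Because $\epsilon^i_n$ has degree $0$ and $E^i_n$ therefore commutes with the reshuffling flips without extra sign, I expect this to go through cleanly, but it is the step that requires care to state honestly. I would organize the proof as a short case analysis on $l$ ($l=0$, $l=1$, and $l\geq 2$), reducing each to the multiplicativity and the $d$-compatibility of $\epsilon^i_n$ established above.
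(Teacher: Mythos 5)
Your proposal is essentially the paper's own proof: the same case split ($l=0$, $l=1$, $l\geq 2$), the same reduction to the cochain tensor leg, and for $l\geq 2$ the same computation --- the paper checks on the basis $\varphi_j$ of $N^0(\Delta^n)$ that both sides vanish unless every tensor factor is of the form $\alpha_k\otimes\varphi_i$, which is exactly the content of your claim that $\epsilon^i_n$ is a morphism of unital algebras (vertex evaluation is an algebra map $N^\bullet(\Delta^n)\to\K$, composed with the unit inclusion). Your packaging is marginally more conceptual but it is the same underlying fact, and your observation that the Koszul signs cause no trouble because $\epsilon^i_n$ preserves degree on all non-vanishing terms is also correct.

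One justification, however, is false as written, even though the claim it supports is true. You assert that the compatibility $\epsilon^i_n\circ d = d\circ\epsilon^i_n$ holds because ``the cochain differential vanishes on the $0$-cochain $\varphi_i$''. It does not: in $N^\bullet(\Delta^n)$ one has $d\varphi_i=\sum_{j<i}\varphi_{j,i}-\sum_{k>i}\varphi_{i,k}\neq 0$ whenever $n\geq 1$; no single $\varphi_j$ is closed, only the sum $\mathbbm{1}=\sum_j\varphi_j$ is. The correct argument, and the one the paper gives for the $l=1$ case, is in two halves: $\epsilon^i_n\circ d=0$ because $d$ raises degree and $\epsilon^i_n$ kills everything of positive degree, and $d\circ\epsilon^i_n=0$ because the image of $\epsilon^i_n$ is spanned by $\mathbbm{1}$ and $d\mathbbm{1}=0$. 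Both composites are therefore zero and the lemma is unaffected, but you should replace the appeal to $d\varphi_i=0$ by the appeal to $d\mathbbm{1}=0$.
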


\begin{proof}
To prove the lemma we distinguish three different cases, first we prove the lemma for $l=0,1$ and then for $l \geq 2$. When $l=0$ we find that $E^i_nNQ_0=NQ_0$, since $NQ_0(1)=Q_0(1)\otimes \mathbbm{1}$ and $\epsilon^i_n(\mathbbm{1})=\mathbbm{1}$. When $l=1$ the operation $NQ_1$ is given by $Q_1 \otimes \id + \id \otimes d_{N^{\bullet}(\Delta^n)}$. Since $E^i_n$ vanishes on all elements of degree greater than $0$, $E^i_n \circ (\id \otimes d_{N^{\bullet}(\Delta^n)})=0$. Because $d_{N^{\bullet}(\Delta^n)}(\mathbbm{1})=0$, we also have that $(\id \otimes d_{N^{\bullet}(\Delta^n)})E^i_n=0$. We are only left to show that $E^i_n \circ (Q_1 \otimes \id)=(Q_1 \otimes \id) \circ E_n^i$, which is obvious.

The products $NQ_l$ for $l \geq 2$ are given by $Q_l \otimes \cup^{(l)}$. In this case we see that both sides of the equation vanish in case $v$ contains any element of degree higher than $0$ on the second tensor leg as a tensor factor. Furthermore if we work in terms of generators on $A\otimes N^0(\Delta^n)$ induced by the basis $\varphi_j$ for $0\leq j\leq n$ of $N^\bullet(\Delta^n)$ we see that both sides vanish on elements $v$ that are not of the form $(\alpha_1 \otimes \varphi_i)\otimes\ldots\otimes(\alpha_n \otimes \varphi_i)$. For $v=(\alpha_1 \otimes \varphi_i)\otimes\ldots\otimes(\alpha_n \otimes \varphi_i)$ it is clear that $E^i_nNQ_l(v)=NQ_l((E^i_n)^{\otimes l}v)$, which proves the lemma.
\end{proof}

\begin{lemma}\label{Ed=dE}
The maps $E^i_n$ and $R^i_n$ satisfy the following identities:
\begin{enumerate}
\item
\[
\partial_j E_{n}^i=
\begin{cases}
E_{n-1}^i \partial_j, & \mbox{if } i< j \\
E_{n-1}^{i-1} \partial_j, & \mbox{if } i> j.
\end{cases}
\]
\item
\[
\partial_j R_{n}^i=
\begin{cases}
R_{n-1}^i \partial_j, & \mbox{if } i< j \\
R_{n-1}^{i-1} \partial_j, & \mbox{if } i> j.
\end{cases}
\]
\end{enumerate}


\end{lemma}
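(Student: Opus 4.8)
The plan is to notice that every operator appearing here is of the form $\id_A\otimes(\,\cdot\,)$ on $A\otimes N^\bullet(\Delta^n)$, so both identities reduce to purely combinatorial statements about the cochain operators $\epsilon^i_n$, $h^i_n$ and the face maps $\partial_j$ of the simplicial cochain algebra. The structural input I would record first is that $\partial_j=\id_A\otimes\partial_j$ commutes with $NQ_1$. Indeed $\partial_j\colon N^\bullet(\Delta^n)\to N^\bullet(\Delta^{n-1})$ is a morphism of associative algebras (the normalized cochains form a simplicial associative algebra), so by functoriality of the extension of scalars of Section~\ref{sec:extensionofscalars} the map $\id_A\otimes\partial_j$ is a strict $\infty$-morphism and in particular commutes with every $NQ_l$; concretely for $l=1$ one simply checks that $\id_A\otimes\partial_j$ commutes with $Q_1\otimes\id$ and, being a cochain map, with $\id\otimes d$.

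For part (1) I would write $\epsilon^i_n=\iota\circ\tilde\epsilon^i_n$, where $\tilde\epsilon^i_n$ evaluates on the $i$-th vertex and $\iota$ includes the unit $\mathbbm{1}_n$. Evaluating a cochain on the vertex $e_i$ of $\Delta^{n-1}$ after precomposing with $d^j$ yields the value on $e_{d^j(i)}$, that is $\tilde\epsilon^i_{n-1}\circ\partial_j=\tilde\epsilon^{\,d^j(i)}_n$, and moreover $\partial_j\mathbbm{1}_n=\mathbbm{1}_{n-1}$. Hence $\partial_j\epsilon^i_n=\tilde\epsilon^i_n\cdot\mathbbm{1}_{n-1}$ while $\epsilon^{i'}_{n-1}\partial_j=\tilde\epsilon^{\,d^j(i')}_n\cdot\mathbbm{1}_{n-1}$, so the two coincide exactly when $d^j(i')=i$. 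Solving this with the explicit formula for $d^j$ gives $i'=i$ when $i<j$ and $i'=i-1$ when $i>j$, which are precisely the two cases in the statement; the excluded value $i=j$ is the unique vertex not in the image of $d^j$. Tensoring with $\id_A$ gives the asserted identity for $E^i_n$.

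For part (2) I would use the commutation $\partial_j NQ_1=NQ_1\partial_j$ from the first step together with $R^i_n=NQ_1\circ H^i_n$ and $H^i_n=\id_A\otimes h^i_n$, which reduces everything to the cochain identity $\partial_j h^i_n=h^{i'}_{n-1}\partial_j$ with the same $i'$ as above. I would verify this on the basis $\varphi_I$: the composite is nonzero only when $i\in I$ and $j\notin I$, since $\partial_j\varphi_I=0$ as soon as $j\in I$ while $h$ only detects the vertex $i$. When $j\notin I$ the map $\partial_j$ relabels vertices by the order-preserving bijection $(d^j)^{-1}$ of $\{0,\dots,n\}\setminus\{j\}$ onto $\{0,\dots,n-1\}$, under which $i$ goes to $i'=(d^j)^{-1}(i)$; deleting $i$ from $I$ and then relabelling produces the same basis cochain as relabelling and then deleting $i'$ from $(d^j)^{-1}(I)$. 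Because $(d^j)^{-1}$ preserves order, the position of $i$ in $I$ equals the position of $i'$ in $(d^j)^{-1}(I)$, so the signs produced by $h^i_n$ and $h^{i'}_{n-1}$ agree and both sides coincide; applying $\id_A$ and composing with $NQ_1$ then gives the identity for $R^i_n$.

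The genuinely routine part is the vertex-relabelling bookkeeping for the two cochain operators. The only delicate point is the sign in the contraction, and the observation I would stress is that the order-preservation of $(d^j)^{-1}$ on $\{0,\dots,n\}\setminus\{j\}$ is exactly what keeps the deletion sign unchanged under $\partial_j$ (consistently with the contraction property of Lemma~\ref{dh+hd}). I therefore expect the main obstacle to be this sign tracking, together with the clean justification that $\partial_j$ commutes with $NQ_1$; once these are settled, both identities fall out of the two cochain-level computations.
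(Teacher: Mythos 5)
Your proof is correct, and it is organized differently from the paper's. For part (2) the paper argues by direct expansion: it computes $\partial_j R^i_n(\alpha\otimes\varphi_I)$ and $R^{i'}_{n-1}\partial_j(\alpha\otimes\varphi_I)$ on basis elements, writing $NQ_1=Q_1\otimes\id+\id\otimes d$ with unspecified $\pm$ signs, matches the two expansions via the identity $(d^j)^{-1}(I)\setminus\{i\}=(d^j)^{-1}(I\setminus\{i\})$, and explicitly leaves the sign bookkeeping to the reader. You instead isolate two structural facts --- that $\id_A\otimes\partial_j$ commutes with $NQ_1$ (since $\partial_j$ is a morphism of unital dg algebras, hence gives a strict morphism after extension of scalars), and the purely cochain-level identity $\partial_j h^i_n=h^{i'}_{n-1}\partial_j$ --- from which part (2) follows formally from $R^i_n=NQ_1\circ H^i_n$. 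The combinatorial core is the same in both arguments (the order-preserving bijection $(d^j)^{-1}$ on $\{0,\dots,n\}\setminus\{j\}$), but your factorization is more modular and actually settles the signs that the paper omits. One caveat worth recording: your sign argument reads the sign in $h^i_n$ as $(-1)^k$, depending on the \emph{position} $k$ of the deleted vertex, whereas the paper's displayed formula literally has $(-1)^i$. Your reading is the right one: with the literal $(-1)^i$ the contraction identity of Lemma \ref{dh+hd} already fails (e.g.\ on $\varphi_{12}\in N^1(\Delta^2)$ with $i=1$), and part (2) of the present lemma would come out as $\partial_j R^i_n=-R^{i-1}_{n-1}\partial_j$ in the case $i>j$; the displayed sign is a typo, and your observation that order-preservation of $(d^j)^{-1}$ preserves the deletion position is precisely what makes the corrected signs match.
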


\begin{proof}
To prove part $1$ of the lemma, first observe that $E_{n-1}^i \partial_j$, $E^{i-1}_{n-1} \partial_j$ and $\partial_j E_{n}^i$ vanish on all elements of the form $\alpha \otimes \varphi$ when $\varphi$ is of degree greater or equal than one. We therefore only need to prove part $1$ of the lemma for elements of the form $\alpha\otimes \varphi_k\in A \otimes N^0(\Delta^{n})$.

Explicit formulas for the face map $\partial_j$ are given by 
\[
\partial_j( \varphi_{i_1,...,i_k})=\varphi_{(d^j)^{-1}(i_1,...,i_k)}=\varphi_{i_1,...,i_{m-1},i_m-1,...,i_k-1},
\]
if none of the $i_l$ is equal to $j$ and where $i_{m-1}$ is the largest $i_{m-1}$ smaller than $j$, if one of the $i_l=j$ then the face map is equal to zero.

After applying these formulas to $\alpha \otimes \varphi_k$, it is straightforward to see that part one of the lemma holds. More explicitly on one side we get we get 
\begin{align*}
\partial_j E^i_n (\alpha \otimes \varphi_k)&=\partial_j (\delta_{i,k}\alpha \otimes \mathbbm{1})\\ &=\delta_{i,k}\alpha \otimes \mathbbm{1}
\end{align*}
and on the other side we get 
\[E^i_n \partial_j (\alpha \otimes \varphi_k)=E^i_n ( \alpha \otimes \varphi_{(d^j)^{-1}(k)})\]
which is non-zero if and only if $k=i$ so this is also equal to $\delta_{i,k} \alpha \otimes \mathbbm{1}$. When $j<i$ the proof is the same except that the last term is non zero if and only if $k$ is $(i-1)$ instead of $i$. This proves part one of the lemma.

To prove the second part of the lemma we need to show that $R^{i}_{n-1}\partial_j=\partial_j R^i_n$ for $i<j$ and $R^{i-1}_{n-1}\partial_j=\partial_j R^i_n$ for $i>j$. We first show it for $i<j$. Let $\varphi_I\in N^k(\Delta^n)$, with $I=(i_1,...i_k)$ and let $\alpha \in A$. In this case we get the following sequence of equalities:
\begin{align*}
\partial_j R^i_{n}(\alpha\otimes \varphi_I)&=\partial_j NQ_1 H^i_n(\alpha \otimes \varphi_{I})\\ &=\partial_j NQ_1(\alpha \otimes \varphi_{I\setminus \{i\}}),
\end{align*}
where we define $\varphi_{I\setminus \{i\}}$ to be zero when the indexing set $I$ does not contain the element $i$. When we continue we get the following terms 
\begin{align*}
\partial_j NQ_1(\alpha \otimes \varphi_{I\setminus \{i\}})&=\partial_j (Q_1(\alpha) \otimes \varphi_{I\setminus \{i\}} \pm \sum_{l=0}^n \pm \varphi_{I \setminus \{i\} \cup l}) \\ &= Q_1(\alpha) \otimes \varphi_{(d^j)^{-1}(I\setminus \{i\})} \pm \sum_{l=0}^n\pm \varphi_{(d^j)^{-1}(I \setminus \{i\} \cup l)}.
\end{align*}
Whenever we take the union of $I$ and $\{l\}$ such that $l$ is already contained in $I$, then we set this term to zero. Because the term $(d^j)^{-1}(I \setminus \{i\} \cup j)$ is zero and $(d^j)^{-1}(l)=l-1$ for $ l>j$, this sum can be rewritten as 
\[
Q_1(\alpha) \otimes \varphi_{(d^j)^{-1}(I\setminus \{i\})} \pm \sum_{l=0}^{n-1}\pm \varphi_{(d^j)^{-1}(I \setminus \{i\}) \cup l}.
\]

When we compute the $R^i_{n-1}\partial_j(\alpha \otimes \varphi_I)$ side for $i<j$ we get the following.

\begin{align*}
R^i_{n-1}\partial_j(\alpha \otimes \varphi_I)&=NQ_1 H^{i}_{n-1} \alpha \otimes \varphi_{(d^j)^{-1}(I)}\\
&=NQ_1(\alpha\otimes \varphi_{(d^j)^{-1}(I)\setminus \{i\}}) \\
&=Q_1(\alpha)\otimes \varphi_{(d^j)^{-1}(I)\setminus \{i\}}\pm \sum_{l=0}^{n-1}\pm \alpha \otimes \varphi_{(d^j)^{-1}(I)\setminus \{i\}\cup \{l\}}.
\end{align*}

Because $i<j$ we have an equality between $(d^j)^{-1}(I)\setminus \{i\}$ and $(d^j)^{-1}(I \setminus \{i\})$. This sum is therefore equal to 
\[
Q_1(\alpha) \otimes \varphi_{(d^j)^{-1}(I\setminus \{i\})} \pm \sum_{l=0}^{n-1}\pm \varphi_{(d^j)^{-1}(I \setminus \{i\}) \cup l},
\]
which is $\partial_j R^i_{n}(\alpha\otimes \varphi_I)$, so for $i<j$ the lemma holds. When $i>j$ a similar arguments show that the lemma also holds in that case. We leave it to the reader to check that the signs agree as well.
\end{proof}


\section{MC set of an $A_\infty$-algebra}
In this section we will define and study the simplicial set of Maurer-Cartan elements associated to a complete shifted $A_\infty$-algebra $(A,Q)$. In the following we will denote the set of Maurer-Cartan elements in a complete shifted $A_\infty$-algebra $(A,Q)$ by $MC(A,Q)$. 
\begin{definition}[Maurer-Cartan simplicial set]
The Maurer-Cartan simplicial set $MC_\bullet(A,Q)$ is given by the 
sets $MC_n(A,Q)=MC(A\otimes N^\bullet(\Delta^n), NQ)$ with the face and degeneracy maps induced by those on $A\otimes N^\bullet(\Delta^n)$. 
Given an $\infty$-morphism $F\colon A\rightarrow B$ we can consider the induced $\infty$-morphisms $N^nF\colon A\otimes N^{\bullet}(\Delta^n)\rightarrow B\otimes N^{\bullet}(\Delta^n)$ given by
$N^n F_l=F_l\otimes \cup^{(l)}$ with the convention that $\cup^{(1)}=\id$. Using these we find that $MC_\bullet$ is a functor from the category of (curved) $A_\infty$-algebras with $\infty$-morphisms to 
simplicial sets. \end{definition}

\begin{remark}
In characteristic $0$ we may associate to any $A_\infty$-algebra an $L_\infty$-algebra by symmetrization. Thus we arrive naturally at the question of comparing the Maurer-Cartan simplicial set constructed in \cite{Hinich, Getzler2009} and the one presented in this paper. As mentioned in the introduction the question of whether these are homotopy equivalent remains open. The main issue is caused by the fact that we use the normalized cochains instead of the polynomial de Rham forms on the standard $n$-simplex. If we would have used the polynomial de Rham forms in the definition above (and stayed in characteristic $0$), then all results in this paper would go through and we would have an isomorphism of the Maurer-Cartan simplicial sets of an $A_\infty$-algebra and the corresponding $L_\infty$-algebra. 

The problem is that one cannot use the polynomial de Rham forms in the characteristic non-zero (as they do not satisfy the Poincar\'e lemma) and one cannot use the normalized cochains in the $L_\infty$ case (as they are not commutative). The comparison of the $L_\infty$ and $A_\infty$ cases in characteristic $0$ comes down to comparing the $A_\infty$-algebra arising by extending scalars by polynomial de Rham forms with the one arising by extending scalars by normalized cochains.
\end{remark}

In \cite{Getzler2009} Getzler shows that the analogous simplicial set for an $L_\infty$-algebra is a Kan complex. We will proceed to show the same. In fact the same exact methods work in this case and so we will be brief in the proofs. To do this recall the maps 
\[R^i_n=NQ_1\circ H^i_n\colon A\otimes N^\bullet(\Delta^n)\longrightarrow A\otimes N^\bullet(\Delta^n)\] and note the following corollary of Proposition \ref{homotopy}.

\begin{corollary}\label{x=Ex+Rx+sum}
Suppose that $A$ is a complete shifted $A_\infty$-algebra, then for all $x\in MC_n(A,Q)$ we have the decomposition 
\[x=E^i_nx+R^i_nx-\sum_{k\geq 2}H^i_nNQ_k(x^k)\]
for all $0\leq i\leq n$.
\end{corollary}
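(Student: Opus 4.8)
The plan is to read the claimed decomposition as the homotopy identity of Proposition \ref{homotopy} applied to $x$, with the single term $H^i_n NQ_1(x)$ rewritten by means of the Maurer-Cartan equation. First I would apply Proposition \ref{homotopy} directly to the element $x\in MC_n(A,Q)$. Since $R^i_n=NQ_1\circ H^i_n$ by definition, the identity $H^i_n NQ_1 + NQ_1 H^i_n=\id_{A\otimes N^\bullet(\Delta^n)}-E^i_n$ evaluated on $x$ gives immediately
\[
x=E^i_n x+R^i_n x+H^i_n NQ_1(x).
\]
Thus the entire content of the corollary reduces to the identification $H^i_n NQ_1(x)=-\sum_{k\geq 2}H^i_n NQ_k(x^k)$.

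Next I would use that $x$ is a Maurer-Cartan element of the $A_\infty$-algebra $A\otimes N^\bullet(\Delta^n)$, which is complete by Proposition \ref{completeextension} because $N^\bullet(\Delta^n)$ is finite dimensional. By the corollary following the definition of Maurer-Cartan elements, this means $\mathcal{R}(x)=\sum_{l\geq 0}NQ_l(x^l)=0$, all sums converging by completeness. Isolating the $l=1$ term yields $NQ_1(x)=-NQ_0(1)-\sum_{k\geq 2}NQ_k(x^k)$, and applying $H^i_n$, which acts as $\id_A\otimes h^i_n$ and hence preserves the filtration $F^iA\otimes N^\bullet(\Delta^n)$ and so is continuous, gives
\[
H^i_n NQ_1(x)=-H^i_n NQ_0(1)-\sum_{k\geq 2}H^i_n NQ_k(x^k).
\]

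The one point that needs checking, and where I expect the only (very minor) obstacle to lie, is that $H^i_n NQ_0(1)=0$, i.e.\ that the curvature term is annihilated by the contraction. Recall that $NQ_0(1)=Q_0(1)\otimes\mathbbm{1}$ with $\mathbbm{1}=\sum_{j=0}^n\varphi_j\in N^0(\Delta^n)$, so that $H^i_n NQ_0(1)=Q_0(1)\otimes h^i_n(\mathbbm{1})$. Each $\varphi_j$ sits in cohomological degree $0$ and $h^i_n$ lowers degree by one; concretely its defining formula deletes the unique vertex of $\varphi_j$, producing an empty symbol in $N^{-1}(\Delta^n)=0$. Hence $h^i_n(\mathbbm{1})=0$ and $H^i_n NQ_0(1)=0$. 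Substituting this into the two displays above produces exactly
\[
x=E^i_n x+R^i_n x-\sum_{k\geq 2}H^i_n NQ_k(x^k),
\]
as claimed. The argument is thus essentially immediate once Proposition \ref{homotopy} is available, the only bookkeeping being that the curvature $NQ_0(1)$ is killed by $h^i_n$, which rests on the normalization $N^{-1}(\Delta^n)=0$.
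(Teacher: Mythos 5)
Your proposal is correct and follows exactly the same route as the paper's (one-line) proof, which cites precisely the three ingredients you use: Proposition \ref{homotopy}, the Maurer--Cartan equation for $x$, and the vanishing $H^i_nNQ_0(1)=0$. Your write-up merely fills in the details the paper leaves implicit, in particular the verification that $h^i_n(\mathbbm{1})=0$ because $N^{-1}(\Delta^n)=0$, and the continuity of $H^i_n$ needed to apply it term by term to the convergent sum.
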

\begin{proof}
This follows form Proposition \ref{homotopy}, the MC equation for $x$ and the fact that $H^i_nNQ_0(1)=0$. 
\end{proof}
In the following we will define $mc^i_n(A,Q)$ as $mc^i_n(A,Q):=\mbox{Im\hspace{0.1cm}}R^i_n$.
\begin{lemma}\label{MC=MCxmc}
Given any (curved) complete shifted $A_\infty$-algebra, the map
\[MC_n(A,Q)\longrightarrow MC(A,Q)\times mc^i_n(A,Q),\]
given by  $x\mapsto (E^i_nx, R^i_nx)$, is a bijection for all $0\leq i\leq n$ and all $n\geq 0$. Here we implicitly equate $MC(A,Q)$ and $MC_0(A,Q)$.  
\end{lemma}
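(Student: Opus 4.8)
The plan is to realize the map as a bijection by inverting it through a fixed-point iteration controlled by the complete filtration on $A\otimes N^\bullet(\Delta^n)$, which is complete by Proposition~\ref{completeextension} since $N^\bullet(\Delta^n)$ is finite dimensional. First I would verify that the map is well defined. That $R^i_n x\in mc^i_n(A,Q)$ is immediate from $mc^i_n(A,Q)=\mathrm{Im}\,R^i_n$. For $E^i_n x$, I identify $\mathrm{Im}\,E^i_n=A\otimes\K\mathbbm{1}$ with $A$ and use Lemma~\ref{EQ=QE} to compute $\sum_{l\geq 0}NQ_l\big((E^i_n x)^{l}\big)=E^i_n\big(\sum_{l\geq 0}NQ_l(x^l)\big)=E^i_n(0)=0$, so $E^i_n x$ is a Maurer-Cartan element, i.e.\ lies in $MC(A,Q)=MC_0(A,Q)$.

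For injectivity I would use Corollary~\ref{x=Ex+Rx+sum}, which writes any $x\in MC_n(A,Q)$ as $x=E^i_n x+R^i_n x-\sum_{k\geq 2}H^i_n NQ_k(x^k)$. If $x,y$ have the same image, then $x-y=-\sum_{k\geq 2}H^i_n\big(NQ_k(x^k)-NQ_k(y^k)\big)$, and since each $NQ_k$ with $k\geq 2$ strictly raises the filtration while $NQ_k(x^k)-NQ_k(y^k)$ is a sum of multilinear terms each carrying a factor $x-y$, one gets that $x-y\in F^m$ implies $x-y\in F^{m+1}$. As $\bigcap_m F^m=\{0\}$ this forces $x=y$.

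For surjectivity, given $(a,r)\in MC(A,Q)\times mc^i_n(A,Q)$ with $a$ viewed inside $A\otimes\K\mathbbm{1}$, I set $\Phi(y):=-\sum_{k\geq 2}H^i_n NQ_k(y^k)$ and construct $x$ as the limit of $x_0=a+r$, $x_{m+1}=a+r+\Phi(x_m)$. The filtration estimate above shows $\Phi$ sends a difference in $F^p$ into $F^{p+1}$, so the sequence is Cauchy and converges by completeness. Using the elementary identities $E^i_n H^i_n=0$ and $(H^i_n)^2=0$ (both clear from the definition of $h^i_n$, as contracting twice against the same vertex vanishes), together with the resulting idempotency of $R^i_n$ and the relations $E^i_n a=a$, $R^i_n a=0$, $E^i_n r=0$, $R^i_n r=r$, I would check that the fixed point $x$ satisfies $E^i_n x=a$ and $R^i_n x=r$.

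The main obstacle is that the fixed-point equation $x=a+r+\Phi(x)$ only forces $H^i_n\xi=0$ for $\xi:=\sum_{l\geq 0}NQ_l(x^l)$, rather than the full Maurer-Cartan condition $\xi=0$, so one must bootstrap. Here the computation from the well-definedness step gives $E^i_n\xi=\sum_{l\geq 0}NQ_l(a^l)=0$ because $a$ is Maurer-Cartan, and the construction gives $H^i_n\xi=0$. Applying $H^i_n$ to the Bianchi identity (Lemma~\ref{Bianchi}), which in $A\otimes N^\bullet(\Delta^n)$ reads $NQ_1(\xi)+\sum_{l\geq 2}NQ_l(\mu_{sh}(x^{l-1}\otimes\xi))=0$, and using Proposition~\ref{homotopy} in the form $H^i_n NQ_1(\xi)=\xi-E^i_n\xi-R^i_n\xi=\xi$, yields $\xi=-\sum_{l\geq 2}H^i_n NQ_l(\mu_{sh}(x^{l-1}\otimes\xi))$. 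Every term on the right multiplies $\xi$ by at least one copy of $x$ through a product $NQ_l$ with $l\geq 2$, hence raises the filtration degree of $\xi$; the same completeness argument then gives $\xi\in\bigcap_m F^m=\{0\}$, so $x\in MC_n(A,Q)$. This Bianchi bootstrap is the technical heart of the argument.
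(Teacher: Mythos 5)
Your proposal is correct and follows essentially the same route as the paper's proof: the same fixed-point iteration starting from $e+r$ for surjectivity, the same telescoping-plus-filtration argument via Corollary~\ref{x=Ex+Rx+sum} for injectivity, and the same Bianchi-identity bootstrap (Lemma~\ref{Bianchi} together with Proposition~\ref{homotopy} and $\bigcap_k F^k=\{0\}$) to show the limit actually satisfies the Maurer-Cartan equation. Your bookkeeping is in places more explicit than the paper's (e.g.\ verifying $E^i_nH^i_n=0$, $(H^i_n)^2=0$, and deducing $\xi=H^i_nNQ_1\xi$ from $E^i_n\xi=H^i_n\xi=0$ rather than expanding $NQ_1\alpha$ directly), but the underlying argument is the same.
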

\begin{proof} 
\leavevmode 

\noindent To show surjectivity fix $(e,r)\in MC(A,Q)\times mc^i_n(A,Q)$ and consider the sequence defined recursively by 
\[\alpha_{k+1}=\alpha_0-\sum_{l\geq 2}H^i_nQ_l(\alpha_k^l)\]
where $\alpha_0=e + r$. This sequence is a Cauchy sequence in $A\otimes N^\bullet(\Delta^n)$ and so we may consider its limit $\alpha=\displaystyle\lim_{k\rightarrow \infty}\alpha_k\in A\otimes N^\bullet(\Delta^n)$ by completeness of $A$ and Proposition \ref{completeextension}. By definition of the $\alpha_k$ we have 
\[\alpha=\alpha_0-\sum_{l\geq 2}H^i_nQ_l(\alpha^l),\quad E^i_n\alpha=e\quad\mbox{and}\quad R^i_n\alpha=r.\] This implies that 
\begin{align*} 
NQ_1\alpha &=NQ_1\alpha_0-\sum_{l\geq 2}NQ_1H^i_nNQ_l(\alpha^l)\\
&=\sum_{l\geq 2}H^i_nNQ_1NQ_l(\alpha^l)-\sum_{l\geq 2}NQ_l(\alpha^l)-NQ_0(1).
\end{align*}
Here we used Lemma \ref{EQ=QE}, Proposition \ref{homotopy} and the fact that $e$ is a Maurer-Cartan element. It means we find that 
\[\mathcal{R}(\alpha)=H^i_nNQ_1\mathcal{R}(\alpha)=-\sum_{l\geq1}H^i_nNQ_{l+1}(\alpha^l\otimes\mathcal{R}(\alpha))=0,\]
where the second equality follows from the Bianchi identity (Lemma \ref{Bianchi}) and the third identity follows from the fact that $\bigcap_{k}F^kA=\{0\}$. Note that we have now proved surjectivity of $x\mapsto(E^i_nx,R^i_nx)$. 

\vspace{0.2cm} 

It is left to show injectivity. So suppose $\alpha, \beta\in MC_n(A,Q)$ and $(E^i_n\alpha, R^i_n\alpha)=(E^i_n\beta, R^i_n\beta)$, then by Corollary \ref{x=Ex+Rx+sum} we find that 
\[\alpha-\beta=\sum_{l\geq 2}H^i_nNQ_l(\beta^l-\alpha^l)=
\sum_{l\geq 2}\sum_{k=0}^{l-1}H^i_nNQ_l(\beta^k(\beta-\alpha)\alpha^{k-l-1})=0\]
where the final equality follows again from the fact that $\bigcap_{k}F^kA=\{0\}$. Thus we find that $\alpha=\beta$ which shows injectivity. 
\end{proof}

\begin{proposition}\label{Kanfibration}
Suppose $f\colon (A,Q)\rightarrow (B,P)$ is a surjective strict morphism between complete shifted $A_\infty$-algebras, then the induced map $f\colon MC_\bullet(A,Q)\rightarrow MC_\bullet(B,P)$ is a Kan fibration. 
\end{proposition}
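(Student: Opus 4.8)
The plan is to verify directly that $f$ satisfies the right lifting property against every horn inclusion $\Lambda^n_k\hookrightarrow\Delta^n$ with $0\le k\le n$. Concretely, such a lifting problem amounts to a compatible family of faces $\alpha_j\in MC_{n-1}(A,Q)$ for $j\neq k$ (the horn in $MC_\bullet(A,Q)$) together with an element $b\in MC_n(B,P)$ with $\partial_j b=f(\alpha_j)$ for all $j\neq k$ (the simplex downstairs), and I must produce $x\in MC_n(A,Q)$ with $\partial_j x=\alpha_j$ for $j\neq k$ and $f(x)=b$. The engine of the whole argument is the bijection $\Phi^k\colon MC_n(A,Q)\xrightarrow{\sim}MC(A,Q)\times mc^k_n(A,Q)$, $x\mapsto(E^k_nx,R^k_nx)$, of Lemma \ref{MC=MCxmc}, which I use at the distinguished vertex $i=k$.

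First I record the naturality that makes this work. Because $f$ is a \emph{strict} surjection, the induced map $N^nf=f_1\otimes\cup^{(\bullet)}$ commutes with $NQ_1$, hence with $H^k_n$, $E^k_n$, $R^k_n$, and with every $NQ_l$ (using the strict-morphism identities $P_l\circ f_1^{\otimes l}=f_1\circ Q_l$). In particular the recursive construction used to prove surjectivity in Lemma \ref{MC=MCxmc} is natural in the algebra: running it in $A$ from initial datum $\alpha_0$ and in $B$ from $f(\alpha_0)$ gives limits $x$ and $f(x)$, so $f(x)$ is the unique MC element with coordinates $(f(E^k_nx),f(R^k_nx))$. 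The face maps $\partial_j$ are likewise strict morphisms of the coefficient algebras, and by Lemma \ref{Ed=dE} they intertwine $E^k_n,R^k_n$ with $E^{k'}_{n-1},R^{k'}_{n-1}$, where $k'=k$ if $j>k$ and $k'=k-1$ if $j<k$.

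Using these naturalities I translate all the constraints into conditions on the pair $(e,r):=(E^k_nx,R^k_nx)$. Since the corrections in the recursion lie in $\mathrm{Im}\,H^k_n$ and $E^k_nH^k_n=R^k_nH^k_n=0$, the coordinates of $x$ equal those of the initial datum, so it suffices to choose $(e,r)$ and set $x=(\Phi^k)^{-1}(e,r)$. The requirement $f(x)=b$ becomes $f(e)=E^k_nb$ and $f(r)=R^k_nb$, by the uniqueness above. The requirement $\partial_jx=\alpha_j$ becomes, after applying $E^{k'}_{n-1}$ and $R^{k'}_{n-1}$ and invoking the injectivity in Lemma \ref{MC=MCxmc} in dimension $n-1$, the identities $\partial_j e=E^{k'}_{n-1}\alpha_j$ and $\partial_j r=R^{k'}_{n-1}\alpha_j$ for all $j\neq k$. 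The first only involves the constant $e$: since vertex $k$ lies on every face of the horn, horn compatibility forces all the vertex values $E^{k'}_{n-1}\alpha_j$ to agree with a single $e_0\in MC(A,Q)$, and I set $e=e_0$; then $f(e_0)$ matches the vertex value $E^k_nb$ of $b$ automatically, because $f(\alpha_j)=\partial_j b$ already agrees at vertex $k$.

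Everything thus reduces to one \emph{linear} problem, and this is where I expect the only real work to remain: find $r\in mc^k_n(A,Q)=\mathrm{Im}\,R^k_n$ with prescribed faces $\partial_j r=R^{k'}_{n-1}\alpha_j$ for $j\neq k$ and prescribed image $f(r)=R^k_nb$. I would solve it in two steps. Since $R^k_n$ commutes with $f$, surjectivity of $f_1$ upgrades to surjectivity of $f\colon mc^k_n(A,Q)\to mc^k_n(B,P)$, so I first pick $r_1$ with $f(r_1)=R^k_nb$. Next I correct $r_1$ by an element $c\in mc^k_n(\ker f_1)$ matching the faces: the defects $\gamma_j:=R^{k'}_{n-1}\alpha_j-\partial_j r_1$ lie in $mc^{k'}_{n-1}(\ker f_1)$ (one checks $f(\gamma_j)=0$) and form a compatible horn there, which I fill using the contracting homotopy $H^k_n$ via Proposition \ref{homotopy}, exactly as in the abelian case. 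The crucial point keeping this valid in the \emph{curved} setting is that the identity $H^k_nNQ_1+NQ_1H^k_n=\id-E^k_n$ of Proposition \ref{homotopy} holds with no flatness hypothesis; together with $R^k_nH^k_n=0$, $H^k_nH^k_n=0$ and the idempotency of $R^k_n$, it lets one build the filler $c$ inductively (by the standard inclusion–exclusion over the faces of the horn). Completeness of $A\otimes N^\bullet(\Delta^n)$ (Proposition \ref{completeextension}) guarantees the limits exist, and $x=(\Phi^k)^{-1}(e_0,\,r_1+c)$ then solves the lifting problem. The hard part is precisely this linear horn-filling inside $\ker f_1$; once it is in hand, the nonlinear content is entirely absorbed by the decomposition of Lemma \ref{MC=MCxmc}.
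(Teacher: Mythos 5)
Your proposal is correct, and its skeleton is the same as the paper's: everything hinges on the coordinate bijection $x\mapsto(E^i_nx,R^i_nx)$ of Lemma \ref{MC=MCxmc}, its compatibility with face maps (Lemma \ref{Ed=dE}), and the fact that a strict surjection commutes with $E^i_n$, $H^i_n$, $R^i_n$ and the $NQ_l$. The genuine difference is the order of operations, and it determines how much must be proved by hand. The paper solves the \emph{entire} linear lifting problem first, in one stroke: $f\otimes\id$ is a surjection of simplicial abelian groups, hence a Kan fibration, so there is a (generally non-MC) element $\rho$ filling the horn and lifting $\gamma$; it then defines $\alpha$ as the unique MC element with coordinates $(E^i_n\rho,R^i_n\rho)$ and checks, via injectivity in Lemma \ref{MC=MCxmc}, Lemma \ref{Ed=dE} and strictness, that $\alpha$ fills the horn and maps to $\gamma$. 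You pass to coordinates first, which forces you afterwards to solve a bespoke linear problem (prescribed faces and prescribed image for $r\in\mathrm{Im}\,R^k_n$), and that is precisely the step you leave as a sketch (``the hard part''). The step is fillable, but the inclusion--exclusion with $H^k_n$ is the roundabout way to do it; the efficient completion is the same move the paper makes at the outset: the defects $\gamma_j$ form a compatible horn in the simplicial vector space $\bigl((\ker f_1)\otimes N^\bullet(\Delta^\bullet)\bigr)^0$, which is a Kan complex simply because it is a simplicial abelian group, so it admits some filler $w$; setting $c:=R^k_nw$, Lemma \ref{Ed=dE} gives $\partial_jc=R^{k'}_{n-1}\gamma_j=\gamma_j$, the last equality by idempotency of $R$ on its image. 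Note that this idempotency, like your uses of $E^k_nH^k_n=0$ and $R^k_nH^k_n=0$ (the latter being what makes idempotency survive the curved setting, since $(NQ_1)^2\neq0$ but it only enters multiplied by $H^k_nH^k_n$), rests on the side conditions $h^i_nh^i_n=0$ and $\epsilon^i_nh^i_n=0$ of the explicit contraction; these hold by the formula for $h^i_n$ but are nowhere stated, so you should record them. In summary, both arguments reduce the curved nonlinear problem to the statement that surjections of simplicial abelian groups are Kan fibrations; the paper cites that statement once and transports its output through the bijection, which keeps the verification purely formal, whereas your ordering makes the linear content transparent (vertex, faces, image of $r$) at the price of re-deriving a hand-made version of that statement inside $\ker f_1$.
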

\begin{proof} 
\leavevmode

\noindent For $0\leq i\leq n$ let $\beta\in\boldmath{sSet}(\Lambda^n_i,MC_\bullet(A,Q))$ and let $\gamma$ be an $n$-simplex in $MC_\bullet(B,P)$ such that $\partial_j\gamma=f(\partial_j\beta)$ for $j\neq i$. The map $f\otimes \id\colon A\otimes N^\bullet(\Delta^n)\rightarrow  B\otimes N^\bullet(\Delta^n)$ is a surjective map of simplicial Abelian groups and therefore it is a Kan fibration. Thus there exists an element $\rho\in (A\otimes 
N^\bullet(\Delta^n))^0$ such that $\partial_j\rho=\partial_j\beta$ for all $j\neq i$ and $f\otimes \id(\rho)=\gamma$. Let $\alpha\in MC_\bullet(A,Q)$ be the unique element with $E^i_n\alpha=E^i_n\rho$ and $R^i_n\alpha=R^i_n\rho$ given by Lemma 
\ref{MC=MCxmc}. By Lemma \ref{Ed=dE} we find that $E^i_n\partial_j\alpha=E^i_n\partial_j\beta$ and $R^i_n\partial_j\alpha=R^i_n\partial_j\beta$ for $i\neq j$. Thus, by Lemma \ref{MC=MCxmc}, we find that $\partial_j\alpha=\partial_j\beta$ for $j\neq i$ and $\alpha$ fills the horn $\beta$ in $MC_\bullet(A,Q)$. The facts that $f$ is a strict morphism, 
$f\otimes\id(\rho)=\gamma$, $E^i_n\alpha=E^i_n\rho$ and $R^i_n\alpha=R^i_n\rho$ show that $E^i_nf\otimes \id(\alpha)=E^i_n\gamma$ and $R^i_nf\otimes \id(\alpha)=R^i_n\gamma$. Thus by lemma \ref{MC=MCxmc} we find that $f(\alpha)=\gamma$ and the proposition follows, since $\gamma$ and $\beta$ were arbitrary. 

\end{proof}

\begin{corollary}
Since any complete shifted $A_\infty$-algebra admits a strict map to the trivial shifted $A_\infty$-algebra $0$ we find that $MC_\bullet(A,Q)$ is always a Kan complex. 
\end{corollary}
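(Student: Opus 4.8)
The plan is to deduce this directly from Proposition~\ref{Kanfibration} by invoking the standard characterization that a simplicial set $X$ is a Kan complex precisely when the unique map $X \to *$ to the terminal simplicial set (the point) is a Kan fibration. So the entire task reduces to exhibiting such a Kan fibration out of the machinery already set up, and the corollary becomes a purely formal consequence of the preceding proposition.

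First I would take the trivial shifted $A_\infty$-algebra $0$, namely the zero vector space with all structure maps $Q_n$ equal to zero; it is complete for the trivial filtration, since every Cauchy sequence is eventually the constant zero sequence. There is a unique $\infty$-morphism $f\colon (A,Q)\to 0$, the one collapsing everything to zero. I would observe that $f$ is automatically strict, as all of its components $f_n$ vanish and in particular $f_n=0$ for $n\geq 2$, and that it is surjective, the target being a point. Hence the hypotheses of Proposition~\ref{Kanfibration} are met, and the induced map $MC_\bullet(A,Q)\to MC_\bullet(0)$ is a Kan fibration.

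Next I would identify the target. Since $0\otimes N^\bullet(\Delta^n)=0$ for every $n$, the only element in degree $0$ is $0$ itself, and it trivially satisfies the Maurer-Cartan equation; thus $MC_n(0)=\{0\}$ for all $n$, so $MC_\bullet(0)$ is the constant simplicial set on a point, i.e.\ the terminal simplicial set $*$. Combining this with the previous step, the map $MC_\bullet(A,Q)\to *$ is a Kan fibration, which is exactly the assertion that $MC_\bullet(A,Q)$ is a Kan complex.

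I do not expect any genuine obstacle here: all the real content lives in Proposition~\ref{Kanfibration}, and the corollary is formal. The only points needing (trivial) verification are that the collapse map to $0$ is a strict surjection and that $MC_\bullet(0)$ is a point, both of which are immediate. If anything, the single conceptual step worth stating explicitly is the equivalence \emph{Kan complex} $\Leftrightarrow$ \emph{fibrant over the point}, which is precisely what allows Proposition~\ref{Kanfibration} to do all the work.
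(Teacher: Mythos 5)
Your proof is correct and is exactly the argument the paper intends: the corollary's proof is precisely to apply Proposition~\ref{Kanfibration} to the unique (strict, surjective) collapse map $(A,Q)\to 0$, note that $MC_\bullet(0)$ is the terminal simplicial set, and use the characterization of Kan complexes as simplicial sets fibrant over the point. Your write-up simply makes explicit the routine verifications (completeness of $0$, strictness and surjectivity of the collapse map, $MC_\bullet(0)=*$) that the paper leaves implicit.
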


\begin{proposition}
Suppose $A$ and $B$ are flat shifted $A_\infty$-algebras concentrated in  degrees $-1$ and below, suppose further that $f\colon A\rightarrow B$ is a strict quasi-isomorphism between them, then the induced map on Maurer-Cartan simplicial sets is a homotopy equivalence. 
\end{proposition}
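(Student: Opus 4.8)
The plan is to prove that $f$ is a weak equivalence of Kan complexes and then invoke Whitehead's theorem. Since $A$ and $B$ are concentrated in degrees $-1$ and below we have $A^0=B^0=0$, so the only degree-$0$ element is $0$ and hence $MC_0(A)=MC(A)=\{0\}$, and likewise for $B$. Thus both $MC_\bullet(A)$ and $MC_\bullet(B)$ have a single $0$-simplex, are connected, and $f$ preserves this canonical basepoint $0$; by Whitehead's theorem it therefore suffices to show that $f$ induces isomorphisms on $\pi_n(-,0)$ for all $n\geq 1$. Being an $\infty$-quasi-isomorphism in the sense imposed earlier, $f$ is filtered and induces quasi-isomorphisms $F^kA\rightarrow F^kB$, and hence (via the short exact sequences $0\rightarrow F^{k+1}\rightarrow F^k\rightarrow \mathrm{gr}^k\rightarrow 0$) quasi-isomorphisms on the associated graded pieces $\mathrm{gr}^kA\rightarrow \mathrm{gr}^kB$.

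First I would reduce to the linear (``abelian'') situation by filtering. For every $k$ the quotient maps $A/F^{k+1}A\rightarrow A/F^kA$ are surjective strict morphisms of complete $A_\infty$-algebras (the quotient filtrations terminate, so the quotients are nilpotent and complete). By Proposition \ref{Kanfibration} they induce a tower of Kan fibrations $\cdots\rightarrow MC_\bullet(A/F^{k+1}A)\rightarrow MC_\bullet(A/F^kA)\rightarrow\cdots\rightarrow MC_\bullet(A/F^1A)=MC_\bullet(0)=\ast$. The fibre of $MC_\bullet(A/F^{k+1}A)\rightarrow MC_\bullet(A/F^kA)$ over the basepoint consists of the Maurer--Cartan elements supported in $\ker(A/F^{k+1}A\rightarrow A/F^kA)=\mathrm{gr}^kA$, i.e.\ it is $MC_\bullet(\mathrm{gr}^kA)$. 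The point of passing to $\mathrm{gr}^kA$ is that all products $Q_n$ with $n\geq 2$ raise filtration degree and therefore vanish on a single graded piece, so $\mathrm{gr}^kA$ carries only the differential $Q_1$.

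Next I would compute the homotopy of these fibres. Because the higher products vanish, the Maurer--Cartan equation on $\mathrm{gr}^kA\otimes N^\bullet(\Delta^n)$ is the linear equation $NQ_1x=0$, so $MC_\bullet(\mathrm{gr}^kA)$ is the simplicial abelian group $[n]\mapsto Z^0\!\left(\mathrm{gr}^kA\otimes N^\bullet(\Delta^n)\right)$ of degree-$0$ cocycles. By the Dold--Kan correspondence (using that $N^\bullet(\Delta^\bullet)$ computes cohomology and that $\mathrm{gr}^kA$ is concentrated in negative degrees) its homotopy groups are $\pi_n(MC_\bullet(\mathrm{gr}^kA),0)\cong H^{-n}(\mathrm{gr}^kA,Q_1)$, and $f$ is an isomorphism on these since it is a quasi-isomorphism on each $\mathrm{gr}^kA$. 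Starting from the trivial base $MC_\bullet(A/F^1A)=\ast$ and inducting up the tower, the long exact sequences of the fibrations together with the five lemma give that $f$ induces isomorphisms $\pi_*(MC_\bullet(A/F^kA))\xrightarrow{\cong}\pi_*(MC_\bullet(B/F^kB))$ for every $k$. Finally, by completeness (Proposition \ref{completeextension} applied to the finite-dimensional $N^\bullet(\Delta^n)$) one has $MC_\bullet(A)=\lim_k MC_\bullet(A/F^kA)$, and since the tower consists of Kan fibrations this limit is the homotopy limit; the Milnor $\lim^1$ exact sequence is natural in $f$, so $f$ induces an isomorphism on $\pi_n(MC_\bullet(A),0)$. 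Whitehead's theorem then finishes the proof.

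I expect the main obstacle to be the passage from the associated graded to the filtered object, i.e.\ the two limit steps. Concretely, one must verify carefully that $MC_\bullet(A)$ really is the (homotopy) limit of the tower $MC_\bullet(A/F^kA)$ --- which uses completeness and the Cauchy-sequence argument already employed in Lemma \ref{MC=MCxmc} --- and that the Milnor sequence applies and is natural, so that isomorphisms on each finite stage assemble to an isomorphism in the limit. A secondary technical point is the Dold--Kan identification of $MC_\bullet(\mathrm{gr}^kA)$ with the cochain complex $(\mathrm{gr}^kA,Q_1)$; here the hypothesis that $A$ is concentrated in degrees $-1$ and below is exactly what guarantees connectivity, so that $\pi_0$ is a point, Whitehead applies, and the simplicial abelian groups stay in the correct range.
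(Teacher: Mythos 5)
Your proof is correct and follows essentially the same route as the paper, which simply defers to Getzler's argument ``mutatis mutandis'': filter, obtain a tower of Kan fibrations via Proposition \ref{Kanfibration} whose fibers are Dold--Kan spaces of the associated graded complexes, apply the five lemma, and pass to the limit. Your Milnor ${\lim}^1$ step is precisely the adaptation needed to go from Getzler's nilpotent setting to the complete filtered one, and invoking the paper's standing condition that filtered quasi-isomorphisms induce quasi-isomorphisms $F^kA\rightarrow F^kB$ is the right (and necessary) way to get quasi-isomorphisms on the graded pieces $\mathrm{gr}^kA\rightarrow\mathrm{gr}^kB$.
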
 

The proof of this proposition can be taken mutatis mutandis from \cite{Getzler2009}.

\begin{remark} 
Finally we should note that Dolgushev--Rogers expanded the theory developed by Getzler in \cite{DR2015}, in particular they proved a version of the proposition above that does not need the restriction imposed on degree and, moreover, it allows for $\infty$-morphisms instead of only strict morphism. The authors are of the opinion that a similar result can be proved also in the case of $A_\infty$-algebras, however it would be a considerable addition to the present paper to prove it here. Thus we chose to postpone this to future work.

\end{remark}

\section{Application: The deformation theory of $\infty$-morphisms of algebras over non-symmetric operads}\label{sec:applications}

In the last section of this paper, we apply the theory developed in this paper to the deformation theory of $\infty$-morphisms of algebras over non-symmetric operads. The main advantage of our theory is that everything now also works over a field of arbitrary characteristic and not just over a field of characteristic $0$. Most of this section is the non-symmetric version of the results of \cite{Robert-NicoudWierstra1} and \cite{Robert-NicoudWierstra2}. Since all the proofs are completely analogous to the proofs in those papers, we will omit most of them. 

In the remainder of this paper we assume that all operads and cooperads are non-symmetric. We further assume that all operads and cooperads are reduced, i.e. $\P(0)=0$ and $\P(1)=\K$ (resp. $\C(0)=0$ and $\C(1)=\K$). We further assume that all cooperads and coalgebras are conilpotent.

\subsection{$\infty_{\alpha}$-morphisms}

In this section we recall the definition of homotopy morphisms relative to an operadic twisting morphism, we call these morphisms $\infty_\alpha$-morphisms. To do this we need the bar and cobar construction relative to an operadic twisting morphism. We will not recall those here and refer the reader to Chapter 11 of\cite{LodayVallette}. The bar construction relative to a twisting morphism $\alpha:\C \rightarrow \P$ is denoted by $B_\alpha$ and the cobar construction relative to $\alpha$ is denoted by $\Omega_\alpha$. 

\begin{definition}
Let $\alpha:\C \rightarrow \P$ be an operadic twisting morphism from a cooperad $\C$ to an operad $\P$. Let $C$ and $C'$ be $\C$-coalgebras and let $A$ and $A'$ be $\P$-algebras.
\begin{enumerate}
\item An $\infty_\alpha$-morphism, $\Psi:C' \rightsquigarrow C$, from $C'$ to $C$, is defined as a $\P$-algebra map $\Psi:\Omega_\alpha C' \rightarrow \Omega_\alpha C$.
\item An $\infty_\alpha$-morphism, $\Phi:A \rightsquigarrow A'$, from $A$ to $A'$, is defined  as a $\C$-coalgebra map $\Phi:B_\alpha A \rightarrow B_\alpha A'$.
\end{enumerate}
\end{definition}


On the set of $\infty_\alpha$-morphisms from a $\P$-algebra $A$ (resp. $\C$-coalgebra $C'$) to a $\P$-algebra $A'$ (resp $\C$-coalgebra $C$), we can define a notion of homotopy equivalence. This is done by defining a model structure on the categories of $\P$-algebras and $\C$-coalgebras. 

On the category of $\P$-algebras  we define a model structure in which the weak equivalences are given by quasi-isomorphisms, the fibrations by degree-wise surjective maps and the cofibrations are the maps with the left lifting property with respect to acyclic fibrations. A proof that  this is a model structure can be found as Theorem 1.7 in \cite{Harper}. 

On the category of $\C$-coalgebras we define a model structure in which  the weak equivalences are created by the cobar construction, i.e. a map $f:C '\rightarrow C$ is a weak equivalence if the induced map $\Omega_\alpha f: \Omega_\alpha C' \rightarrow \Omega_\alpha C$ is a quasi-isomorphism of $\P$-algebras. The cofibrations are the degree-wise injective maps and fibrations are the maps with the right lifting property with respect to the acyclic cofibrations. This model structure was originally defined by Vallette in \cite{Vallette} for the case that the twisting morphism $\alpha$ is Koszul. This was generalized to general twisting morphisms by Drummond-Cole and Hirsch in \cite{Drummond-ColeHirsch}. 


Since the categories  of $\P$-algebras and $\C$-coalgebras are model categories we have a notion of homotopy between the maps. To make this explicit we need a path object for $\P$-algebras and a cylinder object for $\C$-coalgebras. To define these objects recall that $N_\bullet(\Delta^1)$, the normalized chains on  the $1$-simplex as a coassociative coalgebra, is given by the following coalgebra (see Definition 3.1 of \cite{Vallette}), $N_\bullet(\Delta^1)$ is given by $\K a\oplus \K b \oplus \K c$, with $\vert a \vert =\vert b \vert =0 $ and $\vert c \vert =1$. The coproduct is given by $\Delta (a) =a \otimes a$, $\Delta (b)=b \otimes b$ and $\Delta (c)= a \otimes c +c \otimes b$, the differential is given by $d(a)=d(b)=0$ and $d(c)=b-a$. 

Similar to Section \ref{sec:extensionofscalars}, we can equip the tensor product of a $\C$-coalgebra $C$ and a coassociative coalgebra $A$ with the structure of a $\C$-coalgebra. Denote by $As^c$ the non-symmetric cooperad encoding coassociative coalgebras. The coproduct on $C \otimes A$ is then given by 
\[
\Delta_{C \otimes A}=C\otimes A \xrightarrow{\Delta_C \otimes \Delta_A} (\C \circ C) \otimes (As^c \circ A) \xrightarrow{\cong} (\C \otimes As^c) \circ (C \otimes A) \xrightarrow{\cong} \C \otimes C \otimes A,
\]
where $\circ$ denotes the non-symmetric composition product. In the last line we use that we have a canonical isomorphism between $\C \otimes As^c$ and $\C$. For more details about this isomorphism, see the dual version of  Theorem \ref{thrm:convolutionalgebras}.

\begin{lemma}
\leavevmode
\begin{enumerate}
\item Let $A$ be a $\P$-algebra, then $A \otimes N^\bullet (\Delta^1)$ is a good path object for $A$.
\item Let $C$ be a $\C$-coalgebra, then $C \otimes N_\bullet (\Delta^1)$ is a good cylinder object for $C$. 
\end{enumerate}
\end{lemma}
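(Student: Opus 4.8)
The plan is to prove both statements by the same recipe: read off the three structure maps of a path/cylinder object from the simplicial geometry of $\Delta^1$ (its two vertices together with the collapse $\Delta^1\to\Delta^0$), tensor them with $A$ (resp. $C$) via the (co)algebra tensoring recalled just above, and then check the defining conditions against the two model structures. Recall that a \emph{good} path object for $A$ is a factorization of the diagonal as $A\xrightarrow{\sim} A\otimes N^\bullet(\Delta^1)\to A\times A$ with the first map a weak equivalence and the second a fibration, while dually a \emph{good} cylinder object for $C$ is a factorization of the fold map as $C\sqcup C\hookrightarrow C\otimes N_\bullet(\Delta^1)\xrightarrow{\sim} C$ with the first map a cofibration and the second a weak equivalence. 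Since weak equivalences of $\P$-algebras are quasi-isomorphisms and fibrations are degreewise surjections, while cofibrations of $\C$-coalgebras are degreewise injections and weak equivalences are created by $\Omega_\alpha$, the two verifications will look rather different in character.

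For part (1), first I would note that the two vertex evaluations $\tilde\epsilon^0_1,\tilde\epsilon^1_1\colon N^\bullet(\Delta^1)\to\K$ and the unit $\K\to N^\bullet(\Delta^1)$, $1\mapsto\mathbbm{1}=\varphi_0+\varphi_1$, are all morphisms of unital dg associative algebras. Tensoring with $A$ produces maps of $\P$-algebras
\[
A\xrightarrow{\ \id_A\otimes(1\mapsto\mathbbm{1})\ }A\otimes N^\bullet(\Delta^1)\xrightarrow{\ \id_A\otimes(\tilde\epsilon^0_1,\tilde\epsilon^1_1)\ }A\otimes(\K\times\K)\cong A\times A,
\]
whose composite is the diagonal since $\mathbbm{1}$ evaluates to $1$ at both vertices. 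The second map is degreewise surjective, because $\alpha\otimes\varphi_0$ and $\alpha\otimes\varphi_1$ already hit the two factors (with $\varphi_{01}$ mapping to $0$), so it is a fibration. For the first map I would invoke the contraction $h^i_1$ of Lemma \ref{dh+hd}, which exhibits the unit $\K\to N^\bullet(\Delta^1)$ as a quasi-isomorphism; as we work over the field $\K$, tensoring with $A$ preserves this, so $A\to A\otimes N^\bullet(\Delta^1)$ is a quasi-isomorphism, i.e. a weak equivalence. This settles (1).

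For part (2), dually, the two group-like vertices $a,b\in N_\bullet(\Delta^1)$ define coalgebra maps $\K\to N_\bullet(\Delta^1)$, and the collapse $N_\bullet(\Delta^1)\to\K$ (with $a,b\mapsto 1$ and $c\mapsto 0$) is a coalgebra map. Tensoring with $C$ yields $i_0,i_1\colon C\to C\otimes N_\bullet(\Delta^1)$ and $p\colon C\otimes N_\bullet(\Delta^1)\to C$ with $p\,i_0=p\,i_1=\id_C$, so that the induced map $C\sqcup C\to C\otimes N_\bullet(\Delta^1)$ factors the fold map through $p$. The cofibration condition is degreewise injectivity of this map, which I would deduce from the fact that $i_0$ and $i_1$ land in the complementary summands $C\otimes a$ and $C\otimes b$, following Vallette's description of the interval coalgebra (Definition 3.1 of \cite{Vallette}). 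For the weak equivalence I would argue that $p$ is split by $i_0$ together with the explicit contracting homotopy obtained by tensoring $C$ with the (chain) contraction dual to $h^i_1$; this is a deformation retract of underlying complexes, which is preserved by the cobar functor, so $\Omega_\alpha(p)$ is a quasi-isomorphism of $\P$-algebras, hence $p$ is a weak equivalence of $\C$-coalgebras.

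The hard part will be entirely on the coalgebra side. For the path object everything is elementary (surjectivity on the nose, and a quasi-isomorphism over a field), but for the cylinder object the weak equivalences are not quasi-isomorphisms; they are detected by $\Omega_\alpha$, so the crux is to verify that the collapse map remains a quasi-isomorphism after cobar. I expect the deformation-retract argument to be the cleanest route, but it needs care because the contracting homotopy is not itself a coalgebra map: the preservation must be phrased at the level of complexes and pushed through the cobar construction (functorial in the underlying graded object) via the perturbation lemma, rather than as an equivalence of coalgebras. A secondary technical point is making the monomorphism statement precise, which requires the explicit coproduct $C\sqcup C$ of conilpotent $\C$-coalgebras; here the non-symmetric case is formally identical to the treatment in \cite{Vallette}, so I would simply adapt that argument.
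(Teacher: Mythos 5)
The paper never proves this lemma: it is stated as the non-symmetric analogue of results of Vallette \cite{Vallette}, Drummond-Cole--Hirsh \cite{Drummond-ColeHirsch} and \cite{Robert-NicoudWierstra2}, and the proof is omitted, so your proposal can only be measured against the standard argument in that literature --- which is indeed the route you follow. Part (1) is complete and correct: the three maps are maps of $\P$-algebras by functoriality of extension of scalars (using $\P\otimes_{\mathrm{H}} As\cong\P$ for non-symmetric $\P$), degreewise surjectivity onto $A\times A$ is immediate, and tensoring the contraction of Lemma \ref{dh+hd} with $\id_A$ shows that the unit inclusion $A\rightarrow A\otimes N^\bullet(\Delta^1)$ is a quasi-isomorphism, hence a weak equivalence. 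Likewise, in part (2) the construction of $i_0$, $i_1$ and $p$, the identification of $C\sqcup C$ with $C\oplus C$, and the cofibration claim are all correct.

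The gap is in the only non-formal step of the whole lemma: showing that $\Omega_\alpha(p)$ is a quasi-isomorphism. You correctly retract your first formulation (``a deformation retract of underlying complexes is preserved by the cobar functor'' --- false in general: if $\Omega_\alpha$ preserved quasi-isomorphisms of underlying complexes, the weak equivalences of the coalgebra model structure would simply be the quasi-isomorphisms, which they are not), but the perturbation-lemma repair remains a gesture, and it is missing the one hypothesis that makes it converge: \emph{conilpotency}. Writing $\Omega_\alpha(C\otimes N_\bullet(\Delta^1))=\bigl(\P\circ(C\otimes N_\bullet(\Delta^1)),\,d_1+d_\alpha\bigr)$ and transferring along the retract with homotopy $H=\P\circ(\id_C\otimes h)$, the perturbation lemma requires local nilpotence of $Hd_\alpha$; this holds because $d_\alpha$ applies the reduced (arity $\geq 2$, since $\alpha$ vanishes on $\C(1)$) decomposition map to a tensor factor and hence strictly decreases coradical degrees, which conilpotency makes finite, while $H$ preserves the coradical filtration. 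Without this input the argument ``proves'' the false general statement above. A second, smaller, omission: the perturbation lemma outputs a retraction whose projection is a corrected map $p_\infty$, not $\Omega_\alpha(p)$ itself, so an identification (up to homotopy) is still owed. Both points are packaged at once by the argument the cited literature actually uses, namely Vallette's filtered-quasi-isomorphism lemma: $p$ and the homotopy $\id_C\otimes h$ respect the coradical filtrations, so $p$ induces quasi-isomorphisms (in fact homotopy equivalences) on associated graded pieces; filtering both cobar constructions accordingly, $\Omega_\alpha(p)$ is a quasi-isomorphism on the associated graded, and the spectral sequence converges since the filtration is exhaustive and bounded below. Your plan becomes a proof once this filtration argument (or the local-nilpotence verification together with the identification of $p_\infty$ with $\Omega_\alpha(p)$) is supplied.
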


Using these cylinder and path objects we can define the notion of homotopy between $\infty_{\alpha}$-morphisms.

\begin{definition}
\leavevmode
\begin{enumerate}
\item Let $\Psi,\Psi':C'\rightsquigarrow C$ be two $\infty_\alpha$-morphisms of $\C$-coalgebras, then we call them homotopic if they are homotopic in the model category of $\P$-algebras, in other words if there exists a morphism $H:C' \otimes N_\bullet (\Delta^1) \rightarrow C$, such that the restriction to $C'\otimes a$ is $\Psi$ and the restriction of $H$ to $C'\otimes b$ is $\Psi'$.
\item Let $\Phi,\Phi':A \rightarrow A'$ be two $\infty_\alpha$-morphisms of $\P$-algebras, then we call them homotopic if they are homotopic in the model category of $\C$-coalgebras, i.e. if there exists a map $H:A \rightarrow A'\otimes N^\bullet (\Delta^1)$, such the projection on the first vertex is $\Phi$ and the projection on the second vertex is $\Phi'$.
\end{enumerate}
\end{definition}

\subsection{$A_\infty$-convolution algebras  and the deformation theory of $\infty_\alpha$-morphisms}\label{sec:convolutionalgebras}

Let $\mathcal{C}$ be a cooperad, $\mathcal{P}$ be an operad and $\alpha:\C \rightarrow \P$ be an operadic twisting morphism. Recall from \cite{BergerMoerdijk}, that the convolution operad $\mathrm{Hom}(\C,\P)$ is defined as follows. The arity $n$ component, $\mathrm{Hom}(\C,\P)(n)$, of this operad is defined as $\mathrm{Hom}(\C(n),\P(n))$ the space of linear maps from the arity $n$ component of $\C$ to the arity $n$ component of $\P$. Let $f \in \mathrm{Hom}(\C,\P)(n)$ and $g_1,...,g_n \in \mathrm{Hom}(\C,\P)$, with $g_i \in \mathrm{Hom}(\C,\P)(m_i)$, then we define the composition map by the following sequence of maps
\[
\C \xrightarrow{\Delta_\C} (\C \circ \C) \xrightarrow{\Delta_{\C}} \C(n) \otimes \C(m_1) \otimes ... \otimes \C(m_n) \xrightarrow{f\otimes g_1 \otimes ... \otimes g_n}
\]
\[
\P(n)\otimes \P(m_1) \otimes ... \otimes \P(m_n) \xrightarrow{\gamma_{\P}} \P(m_1+...+m+n),
\]
Where $\Delta_\C$ is the decomposition map of $\C$ and $\gamma_\P$ is the composition map of $\P$. Recall that a twisting morphism $\alpha:\C \rightarrow \P$ is a Maurer-Cartan element in the pre-Lie algebra associated to the convolution operad. The following theorem is the non-symmetric analog of Lemma 4.1 and Theorem 7.1 of \cite{Wierstra1}, see also Section 4 of \cite{Robert-NicoudWierstra1}.

\begin{theorem}
Let $\C$ be a cooperad and let $\P$ be an operad. Then there exists a bijection
\[
\mathrm{Hom}_{Op}(A_{\infty},\mathrm{Hom}(\C,\P)) \cong \mathrm{Tw}(\C,\P),
\]
between the set of operad morphisms from the $A_\infty$-operad to $\mathrm{Hom}(\C,\P)$ and the set of operadic twisting morphisms from $\C$ to $\P$.
\end{theorem}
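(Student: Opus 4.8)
The plan is to deduce the bijection from the operadic bar-cobar adjunction together with the fact that the $A_\infty$-operad is the cobar construction $\Omega As^c$. Recall from the remark in Section \ref{sec3} that the non-symmetric $A_\infty$-operad is by definition the operadic cobar construction $\Omega As^c$ on the coassociative cooperad $As^c$. For any (non-symmetric) operad $\mathcal{O}$ the operadic bar-cobar adjunction provides a natural bijection
\[
\mathrm{Hom}_{Op}(\Omega As^c,\mathcal{O})\;\cong\;\mathrm{Tw}(As^c,\mathcal{O}),
\]
where the right-hand side is the set of operadic twisting morphisms $As^c\to\mathcal{O}$, that is, the Maurer-Cartan elements of the convolution pre-Lie algebra $\prod_n\mathrm{Hom}(As^c(n),\mathcal{O}(n))$. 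First I would take $\mathcal{O}=\mathrm{Hom}(\C,\P)$ and then identify each side with the corresponding side of the claimed bijection.

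To make the left-hand side concrete, I would use that $\Omega As^c$ is free as a graded operad on $s^{-1}\overline{As^c}$ and that $\overline{As^c}(n)$ is one-dimensional for each $n\geq 2$. Hence a morphism of graded operads $A_\infty\to\mathcal{O}$ is precisely a choice of elements $m_n\in\mathcal{O}(n)$ of the appropriate degree, one for each generator $\mu_n$. The key computation is unwinding compatibility with the cobar differential: the cobar differential on $\Omega As^c$ is dual to the cocomposition of $As^c$, and since $As^c$ encodes associativity this differential applied to $\mu_n$ is exactly the sum of partial compositions $\mu_{a+c+1}\circ_i\mu_b$ appearing in the ``associativity'' relations of Section \ref{sec3}. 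Thus a graded-operad morphism $\phi$ is a morphism of dg operads if and only if the element $m=\sum_n m_n$ satisfies the Maurer-Cartan equation $\partial m+m\star m=0$ in the pre-Lie algebra associated to $\mathcal{O}$, where $\star$ denotes the pre-Lie product $f\star g=\sum_i f\circ_i g$.

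For the right-hand side I would observe that, because $As^c(n)=\K$ is one-dimensional and concentrated in degree $0$ for every $n$, evaluation on the canonical generator yields an isomorphism $\mathrm{Hom}(As^c(n),\mathcal{O}(n))\cong\mathcal{O}(n)$, and one checks this assembles into an isomorphism of (pre-)Lie algebras between $\prod_n\mathrm{Hom}(As^c(n),\mathcal{O}(n))$ and the pre-Lie algebra associated to $\mathcal{O}$. Therefore $\mathrm{Tw}(As^c,\mathcal{O})$ is exactly the Maurer-Cartan set of the pre-Lie algebra of $\mathcal{O}$, matching the description of the left-hand side above. Specializing to $\mathcal{O}=\mathrm{Hom}(\C,\P)$ and recalling that, by definition (as stated before the theorem), an operadic twisting morphism $\alpha\colon\C\to\P$ is precisely a Maurer-Cartan element of the pre-Lie algebra associated to the convolution operad $\mathrm{Hom}(\C,\P)$, this Maurer-Cartan set equals $\mathrm{Tw}(\C,\P)$, which gives the desired bijection.

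The step I expect to be the main obstacle is matching the cobar differential with the pre-Lie and Maurer-Cartan structure, i.e. verifying that the signs and degree shifts introduced by the desuspension $s^{-1}$ in the cobar construction line up with the Koszul signs in the pre-Lie product $\star$ and with the degree convention making a twisting morphism a Maurer-Cartan element. This is exactly where the non-symmetric bookkeeping must be carried out carefully, and it is the analog of Lemma 4.1 and Theorem 7.1 of \cite{Wierstra1}. A secondary point to watch is the distinction between $\prod_n$ and $\bigoplus_n$ over arities when $\C$ and $\P$ are not of finite type; but since the convolution operad is defined arity-wise and the operads and cooperads are reduced, the sums defining $\star$ are arity-wise finite, so no convergence issues arise.
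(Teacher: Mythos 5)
Your proof is correct and is essentially the argument the paper intends: the paper itself omits the proof, deferring to the symmetric analogues (Lemma 4.1 and Theorem 7.1 of \cite{Wierstra1}), and those are proved exactly by your route --- identifying the ($A_\infty$-, resp.\ $L_\infty$-) operad as a cobar construction, applying the operadic bar-cobar adjunction $\mathrm{Hom}_{Op}(\Omega As^c,\mathcal{O})\cong\mathrm{Tw}(As^c,\mathcal{O})$, and identifying the convolution pre-Lie algebra of the pair $(\C,\P)$ with the pre-Lie algebra of the convolution operad $\mathrm{Hom}(\C,\P)$. The two points you flag (signs and degree shifts coming from the desuspension, and arity-wise finiteness of the pre-Lie product) are indeed the only bookkeeping left, and both work out under the paper's conventions: the shifted grading ($Q_n$ of degree $+1$, $As^c$ concentrated in degree $0$) and the reducedness assumption $\C(1)=\P(1)=\K$, which in particular forces the arity-one component of any Maurer-Cartan element to vanish, matching the absence of an arity-one generator in $\Omega As^c$.
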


Using the fact that when we have a $\C$-coalgebra $C$ and a $\P$-algebra $A$, then $\mathrm{Hom}(C,A)$ is a $\mathrm{Hom}(\C,\P)$-algebra (see Proposition 7.1 of \cite{Wierstra1}), we have the following corollary.

\begin{corollary}
Let $C$ be a $\C$-coalgebra and let $A$ be a $\P$-algebra, then $\mathrm{Hom}(\C,\P)$ is a flat $A_\infty$-algebra. The differential $Q_1$ applied to a map $f$ is given by $Q_1(f)=d_A \circ f+ (-1)^{\vert f \vert}f \circ d_C$. The products $Q_n$ for $n \geq 2$ are defined by $Q_n:\mathrm{Hom}(C,A)^{\otimes n} \rightarrow \mathrm{Hom}(C,A)$ is  
\[
Q_n(f_1,...,f_n)(x):=\gamma_A (\alpha\otimes f_1 \otimes ... \otimes f_n)\Delta_C^n(x),
\]
where $\Delta_C^n:C \rightarrow \C(n) \otimes C^{\otimes n}$ is the arity $n$ part of the coproduct of $C$ and $\gamma_A:\P \circ A \rightarrow A$ is the product of $A$. We denote $\mathrm{Hom}(C,A)$ with this $A_\infty$-structure by $\mathrm{Hom}^\alpha(C,A)$.
\end{corollary}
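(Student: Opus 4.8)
The plan is to construct the $A_\infty$-structure on $\mathrm{Hom}(C,A)$ by pulling back along a composite of operad morphisms, and then to extract the explicit formulas by unwinding that composite. Recall the remark that a flat shifted $A_\infty$-algebra structure on a graded vector space $V$ is the same thing as an algebra over the shifted $A_\infty$-operad $A_\infty=\Omega As^c$, i.e.\ an operad morphism $A_\infty\to \mathrm{End}_V$. First I would feed the twisting morphism $\alpha\in\mathrm{Tw}(\C,\P)$ into the preceding theorem to obtain an operad morphism $\phi_\alpha\colon A_\infty\to \mathrm{Hom}(\C,\P)$. Next I would invoke Proposition 7.1 of \cite{Wierstra1}, which supplies the operad morphism $\psi\colon \mathrm{Hom}(\C,\P)\to \mathrm{End}_{\mathrm{Hom}(C,A)}$ expressing that $\mathrm{Hom}(C,A)$ is a $\mathrm{Hom}(\C,\P)$-algebra. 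The composite $\psi\circ\phi_\alpha\colon A_\infty\to \mathrm{End}_{\mathrm{Hom}(C,A)}$ then endows $\mathrm{Hom}(C,A)$ with a flat shifted $A_\infty$-structure; flatness is built into this identification, since $\Omega As^c$ has no arity-$0$ operations and hence $Q_0=0$.

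It then remains to identify the structure maps, which I would do by tracking the images of the generators of $A_\infty=\Omega As^c$ under $\psi\circ\phi_\alpha$. For $n\geq 2$ the arity-$n$ generator is sent by $\phi_\alpha$ to the element of $\mathrm{Hom}(\C,\P)(n)$ assembled from $\alpha$ via the twisting-morphism correspondence of the theorem; applying $\psi$ and evaluating on $f_1\otimes\cdots\otimes f_n$ at a point $x\in C$ is then precisely the instruction to first apply the arity-$n$ coproduct $\Delta_C^n\colon C\to \C(n)\otimes C^{\otimes n}$, then $\alpha\otimes f_1\otimes\cdots\otimes f_n$, and finally the $\P$-algebra structure map $\gamma_A$. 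This reproduces
\[
Q_n(f_1,\ldots,f_n)(x)=\gamma_A(\alpha\otimes f_1\otimes\cdots\otimes f_n)\Delta_C^n(x).
\]
The arity-$1$ part of the composite contributes only the internal differential of the hom-complex, yielding $Q_1(f)=d_A\circ f+(-1)^{|f|}f\circ d_C$.

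The main obstacle I expect is bookkeeping rather than anything conceptual: one must confirm that the differential of the cobar construction $\Omega As^c$ is carried by the composite exactly to the quadratic relations among the $Q_n$ --- equivalently, that $\alpha$ being a genuine twisting morphism, i.e.\ a Maurer-Cartan element of the convolution pre-Lie algebra, is what forces $Q^2=0$ --- and that every Koszul sign produced by the shifted convention matches up, in particular the $+$ sign appearing in $Q_1$ and the signs hidden in $\Delta_C^n$ and $\gamma_A$. Since this is the non-symmetric counterpart of the computations in \cite{Wierstra1} and \cite{Robert-NicoudWierstra1}, these verifications can be imported essentially verbatim.
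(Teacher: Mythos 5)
Your proposal is correct and follows essentially the same route as the paper: the paper likewise combines the preceding theorem's bijection $\mathrm{Hom}_{Op}(A_{\infty},\mathrm{Hom}(\C,\P)) \cong \mathrm{Tw}(\C,\P)$ (applied to $\alpha$) with Proposition 7.1 of \cite{Wierstra1}, which makes $\mathrm{Hom}(C,A)$ an algebra over the convolution operad, and then reads off the structure maps. Your additional remarks --- that flatness follows because $\Omega As^c$ has no arity-$0$ operations, and that the Maurer-Cartan/twisting-morphism condition on $\alpha$ is what guarantees $Q^2=0$ --- are exactly the content the paper delegates to that theorem and to the cited references.
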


This $A_\infty$-structure has the additional property that the Maurer-Cartan elements in the $A_\infty$-algebra $\mathrm{Hom}^\alpha(C,A)$ correspond to the twisting morphisms relative to $\alpha$. This is the non-symmetric analog of Theorem 2.4 of \cite{Robert-NicoudWierstra2}.

\begin{theorem}\label{thrm:convolutionalgebras}
Let $\alpha:\C \rightarrow \P$ be an operadic twisting morphism and  let $C$ be a $\C$-coalgebra and $A$ a $\P$-algebra. Then the following statements hold.
\begin{enumerate}
\item We have  bijections between the following sets
\[
\mathrm{Hom}_{\C-\mbox{coalgebras}}(C,B_{\alpha}A) \cong MC(\mathrm{Hom}^{\alpha}(C,A)) \cong \mathrm{Hom}_{\P-\mbox{algebras}}(\Omega_\alpha C,A).
\]
\item Two $\C$-coalgebra morphisms $f,g:C \rightarrow B_\alpha A$ are homotopic in the category of $\C$-coalgebras if and only if the corresponding Maurer-Cartan elements are gauge equivalent.
\item Two $\P$-algebra morphisms $f,g:\Omega_\alpha C \rightarrow A$ are homotopic in the category of $\P$-algebras if and only if the corresponding Maurer-Cartan elements are gauge equivalent.
\end{enumerate}
\end{theorem}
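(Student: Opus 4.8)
The plan is to deduce all three statements from two ingredients: the bar--cobar adjunction relative to the twisting morphism $\alpha$, and the observation (new content of this paper) that the relative twisting morphisms from $C$ to $A$ are exactly the Maurer--Cartan elements of the $A_\infty$-algebra $\mathrm{Hom}^\alpha(C,A)$ of the preceding corollary. For part (1) I would first recall the fundamental adjunction $\Omega_\alpha \dashv B_\alpha$ of Chapter 11 of \cite{LodayVallette}, which supplies a natural bijection $\mathrm{Hom}_{\P-\mbox{alg}}(\Omega_\alpha C, A) \cong \mathrm{Hom}_{\C-\mbox{coalg}}(C, B_\alpha A)$, each side being classically identified with the set $\mathrm{Tw}_\alpha(C,A)$ of linear maps $C\to A$ of the appropriate degree satisfying the relative twisting-morphism equation. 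The remaining identification to be established is $\mathrm{Tw}_\alpha(C,A) = MC(\mathrm{Hom}^\alpha(C,A))$.

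This last equality is a direct computation. Writing out the curvature $\mathcal{R}(\phi)=\sum_{n\geq 0}Q_n(\phi^{\otimes n})$ of an element $\phi\in \mathrm{Hom}^\alpha(C,A)^0$ using the explicit products $Q_1(\phi)=d_A\circ \phi+(-1)^{|\phi|}\phi\circ d_C$ and $Q_n(\phi^{\otimes n})(x)=\gamma_A(\alpha\otimes \phi^{\otimes n})\Delta_C^n(x)$ (the algebra being flat, so $Q_0=0$), the equation $\mathcal{R}(\phi)=0$ is term by term the relative twisting-morphism equation. Hence $\phi$ is a Maurer--Cartan element of $\mathrm{Hom}^\alpha(C,A)$ if and only if it lies in $\mathrm{Tw}_\alpha(C,A)$, which closes the chain of bijections in part (1).

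For parts (2) and (3) I would route the model-categorical homotopy relation through the Maurer--Cartan simplicial set. The key step is the identification, furnished by extension of scalars (Section \ref{sec:extensionofscalars}), of $A_\infty$-algebras
\[
\mathrm{Hom}^\alpha(C, A \otimes N^\bullet(\Delta^1)) \cong \mathrm{Hom}^\alpha(C,A) \otimes N^\bullet(\Delta^1),
\]
and, on the coalgebra side, $\mathrm{Hom}^\alpha(C \otimes N_\bullet(\Delta^1), A) \cong \mathrm{Hom}^\alpha(C,A) \otimes N^\bullet(\Delta^1)$ via the finite-dimensional duality $N^\bullet(\Delta^1)=N_\bullet(\Delta^1)^\vee$. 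A right homotopy of $\P$-algebra maps $\Omega_\alpha C\to A$ is a map into the good path object $A\otimes N^\bullet(\Delta^1)$, and a left homotopy of $\C$-coalgebra maps $C\to B_\alpha A$ is a map out of the good cylinder object $C\otimes N_\bullet(\Delta^1)$; applying the part-(1) bijection to these yields in both cases an element of $MC(\mathrm{Hom}^\alpha(C,A)\otimes N^\bullet(\Delta^1))=MC_1(\mathrm{Hom}^\alpha(C,A))$, i.e. a $1$-simplex of the Maurer--Cartan simplicial set whose two faces are the Maurer--Cartan elements corresponding to $f$ and $g$. Since gauge equivalence of Maurer--Cartan elements means precisely that they lie in the same path component of $MC_\bullet(\mathrm{Hom}^\alpha(C,A))$, this gives the stated equivalences.

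The main obstacle is the careful bookkeeping in the two identifications of convolution $A_\infty$-algebras with extensions of scalars by $N^\bullet(\Delta^1)$: one must check that the products $Q_n$ built from $\alpha$, $\gamma_A$ and $\Delta_C$ match, up to Koszul signs, the products of the extended-scalars algebra built from the cup product on $N^\bullet(\Delta^1)$, and that the two faces of the resulting $1$-simplex are exactly the restrictions of the homotopy to the two vertices. A secondary point, standard but worth a remark, is that the model-categorical homotopy relation built from these good path and cylinder objects agrees with the ``same path component of $MC_\bullet$'' relation; this uses that $MC_\bullet(\mathrm{Hom}^\alpha(C,A))$ is a Kan complex, so that the relation is already an equivalence relation, together with the fibrancy and cofibrancy of the objects in play.
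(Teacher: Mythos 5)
Your proposal is correct and takes essentially the same route as the paper: the paper omits the proof, declaring it completely analogous to Theorem 2.4 of \cite{Robert-NicoudWierstra2}, and that argument is exactly yours --- the bar-cobar adjunction together with the identification of relative twisting morphisms with Maurer-Cartan elements for part (1), and the identification of the convolution algebra of the path/cylinder objects with $\mathrm{Hom}^\alpha(C,A)\otimes N^\bullet(\Delta^1)$, followed by the Kan-complex argument, for parts (2) and (3). In particular, your use of the finite-dimensional duality $N^\bullet(\Delta^1)=N_\bullet(\Delta^1)^\vee$ is precisely the simplification the paper points to when it remarks that the non-symmetric case is significantly easier than the symmetric one.
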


Note that, compared to the proof of Theorem 2.4 of \cite{Robert-NicoudWierstra2}, the proof in the non-symmetric case is significantly easier because $N^\bullet(\Delta^1)$ and $N_{\bullet}(\Delta^1)$ are both finite dimensional.

Using Theorem \ref{thrm:convolutionalgebras}, we can now define the deformation complex of $\infty_\alpha$-morphisms for non-symmetric operads. This problem was initially stated by M. Kontsevich in his 2017 S\'eminaire Bourbaki, and was answered for symmetric operads over a field of characteristic $0$ by D. Robert-Nicoud and the second author in Definition 2.7 of \cite{Robert-NicoudWierstra2}. Using the theory developed in this paper we can also answer this question for (co)algebras over a field of arbitrary characteristic.

\begin{definition}
Let $\alpha:\C\rightarrow \P$ be an operadic twisting morphism.
\begin{enumerate}
 \item Let $A$ and $A'$ be two $\P$-algebras, the deformation complex of $\infty_\alpha$-morphisms from $A$ to $A'$ is defined as the $A_\infty$-algebra $\mathrm{Hom}^\alpha(B_\alpha A, A')$.
\item Let $C '$ and $C$ be two $\C$-coalgebras, the deformation complex of $\infty_\alpha$-morphisms from $C'$ to $C$ is defined as the $A_\infty$-algebra $\mathrm{Hom}^\alpha(C ',\Omega_\alpha C)$.
\end{enumerate}
\end{definition}

Because of Theorem \ref{thrm:convolutionalgebras} the Maurer-Cartan elements in this deformation $A_\infty$-algebra correspond indeed to the $\infty_{\alpha}$-morphisms between $A$ and $A'$ (resp $C'$ and $C$), and the notion of gauge equivalence corresponds to the relation of homotopy equivalence. This is therefore the correct deformation complex. For more details see the discussion after Definition 2.7 of \cite{Robert-NicoudWierstra2}. 

As stated earlier the main advantage of working with $A_\infty$-algebras is that we no longer need any restrictions on the ground field we are working over.



\appendix
\section{Completed Coalgebras}\label{A}

In this appendix we will fix what we mean by a coalgebra structure on the completion $\widehat{T^c}(A)$ of the coaugmented conilpotent coassociative cofree coalgebra  cogenerated by the complete filtered $\Z$-graded vector space $A$. In fact it comes from the following general notion. We may consider the category of filtered $\Z$-graded vector spaces $V$ that are complete for the filtration topology. Note that we subsume here the property of the filtration that $\cap_{i\geq 0}F^iV=\{0\}$. Then we may equip this category with a monoidal structure by considering the completed tensor product $\hat{\otimes}$, namely we equip the tensor product of two complete filtered vector spaces with the induced filtration and then complete for the corresponding filtration topology. A coalgebra in the completed sense thus means a coalgebra object in this category. 

Let us be explicit in the case of the tensor coalgebra, since this is the only case that occurs in the present paper. First of all we observe that there exists a canonical isomorphism of (filtered) $\Z$-graded vector spaces
\[\widehat{T^c}(A)\hat{\otimes}\widehat{T^c}(A)\cong \overline{T^c(A)\otimes T^c(A)}\] where on the right hand side the overline indicates completion in the induced filtration topology on $T^c(A)\otimes T^c(A)$. This follows from the fact that the tensor product of two Cauchy sequences is a Cauchy sequence and the inclusion
$T^c(A)\otimes T^c(A)\hookrightarrow\widehat{T^c}(A)\otimes\widehat{T^c}(A)$ of filtered vector spaces. Now the map 
\[\Delta\colon T^c(A)\longrightarrow \overline{T^c(A)\otimes T^c(A)}\] yields the unique extension 
\[\hat{\Delta}\colon \widehat{T^c}(A)\longrightarrow \overline{T^c(A)\otimes T^c(A)}.\]
Similarly to the above we find that 
\[\widehat{T^c}(A)\hat{\otimes}\widehat{T^c}(A)\hat{\otimes}\widehat{T^c}(A)\cong \overline{T^c(A)\otimes T^c(A)\otimes T^c(A)}.\]
Thus it is easily seen that $(\hat{\Delta}\otimes \id)\hat{\Delta}$ and $(\id\otimes \hat{\Delta})\circ\hat{\Delta}$ are both extensions of 
\[\Delta^{(3)}\colon T^c(A)\rightarrow \overline{T^c(A)\otimes T^c(A)\otimes T^c(A)}\] 
where $(\Delta\otimes \id)\circ\Delta=\Delta^{(3)}=(\id\otimes \Delta)\circ\Delta$ denotes the iterated coproduct. So, since such extension is unique, we see indeed that 
$\widehat{T^c}(A)$ forms a coassociative coalgebra in the completed sense.

\vspace{0.3cm} 

The rest of the structures needed in this article follow similarly. Given an $A_\infty$-algebra structure $Q$ we can again extend it uniquely to a map 
\[\hat{Q}\colon \widehat{T^c}(A)\longrightarrow \widehat{T^c}(A)\] 
and, similar to  the coassociativity condition on $\hat{\Delta}$, it can be shown that $\hat{Q}$ defines a coderivation in the completed sense, i.e. where we replace all tensor products in the commuting diagram corresponding to the coderivation property by completed tensor products. Furthermore we find that $\hat{Q}^2=0$. Finally we should consider the shuffle product, which as the comultiplication and coderivations gives rise to the unique extended map 
\[\hat{\mu}_{sh}\colon \overline{T^c(A)\otimes T^c(A)}\longrightarrow \widehat{T^c}(A).\]  Again this map satisfies associativity and together with $\hat{\Delta}$ satisfies the Hopf compatibility condition in the completed sense. 

\vspace{0.3cm} 

In the main body of this article we have chosen to drop the hats on $\hat{\Delta}$, $\hat{Q}$ etc.


\bibliographystyle{plain} 
\bibliography{reference}

\begin{thebibliography}{10}

\bibitem{BergerMoerdijk}
Clemens Berger and Ieke Moerdijk.
\newblock Axiomatic homotopy theory for operads.
\newblock {\em Comment. Math. Helv.}, 78(4):805--831, 2003.

\bibitem{CHL}
Joseph Chuang, Julian Holstein, and Andrey Lazarev.
\newblock Maurer-{C}artan moduli and theorems of {R}iemann-{H}ilbert type.
\newblock 2018.
\newblock arXiv:1802.02549v1.

\bibitem{Curtis}
Edward~B. Curtis.
\newblock Simplicial homotopy theory.
\newblock {\em Advances in Math.}, 6:107--209 (1971), 1971.

\bibitem{Dolgushev2005}
V.~A. Dolgushev.
\newblock {\em A Proof of Tsygan's Formality Conjecture for an Arbitrary Smooth
  Manifold}.
\newblock PhD thesis, MIT, 2005.

\bibitem{DHR2015}
Vasily~A. Dolgushev, Alexander~E. Hoffnung, and Christopher~L. Rogers.
\newblock What do homotopy algebras form?
\newblock {\em Adv. Math.}, 274:562--605, 2015.

\bibitem{DR2015}
Vasily~A. Dolgushev and Christopher~L. Rogers.
\newblock A version of the {G}oldman-{M}illson theorem for filtered
  {$L_\infty$}-algebras.
\newblock {\em J. Algebra}, 430:260--302, 2015.

\bibitem{Drummond-ColeHirsch}
Gabriel~C. Drummond-Cole and Joseph Hirsh.
\newblock Model structures for coalgebras.
\newblock {\em Proc. Amer. Math. Soc.}, 144(4):1467--1481, 2016.

\bibitem{Erler2015}
Theodore Erler.
\newblock Relating berkovits and ${A}_\infty$ superstring field theories; small
  hilbert space perspective.
\newblock {\em Journal of High Energy Physics}, 2015(10):157, Oct 2015.

\bibitem{Fukaya2003}
Kenji Fukaya.
\newblock Deformation theory, homological algebra and mirror symmetry.
\newblock In {\em Geometry and physics of branes ({C}omo, 2001)}, Ser. High
  Energy Phys. Cosmol. Gravit., pages 121--209. IOP, Bristol, 2003.

\bibitem{Getzler2009}
Ezra Getzler.
\newblock Lie theory for nilpotent {$L_\infty$}-algebras.
\newblock {\em Ann. of Math. (2)}, 170(1):271--301, 2009.

\bibitem{GoerssJardine}
Paul~G. Goerss and John~F. Jardine.
\newblock {\em Simplicial homotopy theory}.
\newblock Modern Birkh\"auser Classics. Birkh\"auser Verlag, Basel, 2009.
\newblock Reprint of the 1999 edition [MR1711612].

\bibitem{HamiltonLazarev2008}
Alastair Hamilton and Andrey Lazarev.
\newblock Characteristic classes of {$A_\infty$}-algebras.
\newblock {\em J. Homotopy Relat. Struct.}, 3(1):65--111, 2008.

\bibitem{Harper}
John~E. Harper.
\newblock Homotopy theory of modules over operads and non-{$\Sigma$} operads in
  monoidal model categories.
\newblock {\em J. Pure Appl. Algebra}, 214(8):1407--1434, 2010.

\bibitem{Hatcher}
Allen Hatcher.
\newblock {\em Algebraic topology}.
\newblock Cambridge University Press, Cambridge, 2002.

\bibitem{Hinich}
Vladimir Hinich.
\newblock Descent of {D}eligne groupoids.
\newblock {\em Internat. Math. Res. Notices}, (5):223--239, 1997.

\bibitem{LodayVallette}
Jean-Louis Loday and Bruno Vallette.
\newblock {\em Algebraic operads}, volume 346 of {\em Grundlehren der
  Mathematischen Wissenschaften [Fundamental Principles of Mathematical
  Sciences]}.
\newblock Springer, Heidelberg, 2012.

\bibitem{Lurie2010}
Jacob Lurie.
\newblock Moduli problems for ring spectra.
\newblock In {\em Proceedings of the {I}nternational {C}ongress of
  {M}athematicians. {V}olume {II}}, pages 1099--1125. Hindustan Book Agency,
  New Delhi, 2010.

\bibitem{Nicolas2008}
Pedro Nicol\'{a}s.
\newblock The bar derived category of a curved dg algebra.
\newblock {\em J. Pure Appl. Algebra}, 212(12):2633--2659, 2008.

\bibitem{Robert-NicoudWierstra1}
D.~Robert-Nicoud and F.~Wierstra.
\newblock Homotopy morphisms between convolution homotopy {L}ie algebras.
\newblock 2017.
\newblock arXiv:1712.00794.

\bibitem{Robert-NicoudWierstra2}
D.~Robert-Nicoud and F.~Wierstra.
\newblock Convolution algebras and the deformation theory of
  infinity-morphisms.
\newblock 2018.
\newblock arXiv:1806.03371.

\bibitem{Stasheff1963}
James~Dillon Stasheff.
\newblock Homotopy associativity of {$H$}-spaces. {I}, {II}.
\newblock {\em Trans. Amer. Math. Soc. 108 (1963), 275-292; ibid.},
  108:293--312, 1963.

\bibitem{Stasheff1992}
James~Dillon Stasheff.
\newblock Differential graded {L}ie algebras, quasi-{H}opf algebras and higher
  homotopy algebras.
\newblock In {\em Quantum groups ({L}eningrad, 1990)}, volume 1510 of {\em
  Lecture Notes in Math.}, pages 120--137. Springer, Berlin, 1992.

\bibitem{Vallette}
B.~Vallette.
\newblock Homotopy theory of homotopy algebras.
\newblock 2014.
\newblock arXiv:1411.5533v3.

\bibitem{Wierstra1}
F.~Wierstra.
\newblock Algebraic {H}opf invariants and rational models for mapping spaces.
\newblock 2016.
\newblock arXiv:1612.07762.

\end{thebibliography}

\end{document}